\newtheorem{theorem}{\sc Theorem}
\newtheorem{corollary}[theorem]{\sc Corollary}
\newtheorem{lemma}[theorem]{\sc Lemma}
\newtheorem{proposition}[theorem]{\sc Proposition}
\newtheorem{assum}[theorem]{\sc Assumption}
\newtheorem{remark}[theorem]{\sc Remark}
\theoremstyle{definition}
\theoremstyle{remark}
\newcommand{\R}{{\rm I}\kern-0.18em{\rm R}}
\def\E{\mathbb{E}}
\def\PP{\mathbb{P}}
\def\QQ{\mathbb{Q}}
\def\argmin{\mathop{\rm arg\, min}}
\def\g{\gamma}
\def\f{\varphi}
\def\l{\lambda}
\def\s{\sigma}
\def\cN{\mathcal N}
\def\cK{\mathcal K}
\def\tr{\mathrm{tr}}
\journal{?}
\begin{document}

\begin{frontmatter}



\title{Estimation of Low-Rank Covariance Function}


\author[m1]{Koltchinskii, V.\footnote{vlad@math.gatech.edu}\footnote{Supported in part by NSF Grants DMS-1207808 and CCF-1415498}}
\author[m1]{Lounici, K.\footnote{klounici@math.gatech.edu}\footnote{Supported in part by NSF CAREER Grant DMS-1454515 and Simons Collaboration Grant 315477}}
\author[m2]{Tsybakov, A.B.\footnote{alexandre.tsybakov@ensae.fr}\footnote{Supported by GENES and by the French National Research Agency (ANR) under the grants 
IPANEMA (ANR-13-BSH1-0004-02) and Labex ECODEC (ANR - 11-LABEX-0047)}}
\address[m1]{Georgia Institute of Technology, 686 Cherry St, Atlanta GA 30332, USA}
\address[m2]{Laboratoire de Statistique, CREST-ENSAE, 3, av. P.Larousse, 92240 Malakoff, France.}

\begin{abstract}
We consider the problem of estimating a low rank covariance function $K(t,u)$ of a Gaussian process $S(t), t\in [0,1]$ based on $n$ i.i.d. copies of $S$ observed in a white noise. We suggest a new estimation procedure adapting simultaneously to the low rank structure and the smoothness of the covariance function. The new procedure is based on nuclear norm penalization and exhibits superior performances as compared to the sample covariance function by a polynomial factor in the sample size $n$. Other results include a minimax lower bound for estimation of low-rank covariance functions showing that our procedure is optimal as well as a scheme to estimate the unknown noise variance of the Gaussian process.
\end{abstract}

\begin{keyword}
Gaussian process \sep Low rank Covariance Function \sep Nuclear norm \sep Empirical risk minimization \sep Minimax lower bounds \sep Adaptation




\end{keyword}

\end{frontmatter}


\section{Introduction}\label{intro}

Let $X(t), t\in [0,1]$ be a Gaussian process satisfying the following stochastic differential equation:
\begin{align}\label{model}
dX(t) = S(t)dt + \sigma dW(t),\quad t\in [0,1],
\end{align}
where $W$ is the standard Brownian motion, $\sigma>0$ is the noise level, and 
$$
S(t)=\sum_{k= 1}^r \sqrt{\lambda_k}\xi_k \varphi_k(t),\quad  t\in [0,1].
$$
Here $\xi_k$ are i.i.d. standard Gaussian random variables independent of the Brownian motion $W,$  $\{\varphi_k\}_{k=1}^r$ are unknown orthonormal functions in $L_2[0,1],$ possibly, with $r=\infty$, and the coefficients $\lambda_k>0$ are unknown and such that $\sum_{k= 1}^r \lambda_k<\infty$.  The value of $r$ is also unknown. 

Assume that we observe $n$ i.i.d. copies $X_1(t),\dots, X_n(t)$ of the process $X(t)$. In this paper, we study the problem of estimation of the covariance function of the stochastic 
process $S(\cdot),$
\begin{align}
\label{K}
K(t,u) =\E(S(t)S(u))=\sum_{k= 1}^{r} \lambda_k\varphi_k(t) \varphi_k(u), \quad t,u\in [0,1],
\end{align}
based on the observations $\{X_1(t),\dots, X_n(t), t\in [0,1]\}$. If $r=\infty,$ the sum in \eqref{K} is understood in the sense of  $L_2([0,1]\times [0,1])$-convergence. 
In short, (\ref{model}) is a model of a ``signal'' (Gaussian stochastic process $S$) observed in a Gaussian white noise and the 
goal is to estimate the covariance of the signal based on a sample of such observations. 

Statistical estimation of covariance functions has already received some attention in the literature. However, somewhat different setting was considered where the trajectories $X_i(\cdot)$ are observed at discrete time locations:
$$
Y_{i,j} = S(T_{i,j}) + \sigma \xi_{i,j},\quad 1\leq i \leq n,\; 1\leq j \leq m,
$$
where $\xi_{i,j}$ are i.i.d. $\cN(0,1)$ and, for each $i$, the points $T_{i,j}$, $1\leq j \leq m$, are equispaced in the interval $[0,1]$ or independent random variables with uniform distribution on $[0,1]$. In this setting, \cite{YaoMullerWang2005} proposed a local smoothing estimation procedure assuming that the trajectories $X_i(\cdot)$ are well approximated by the projection on the linear span of functions $\varphi_1,\dots,\varphi_k$ for some known fixed $k$ chosen by cross-validation. 
 This procedure is computationally intensive as it requires to compute the eigenvalues and the inverse for $n$ distinct $m\times m$ empirical covariance matrices of the trajectories $X_i$, $1\leq i \leq n$, at each of the cross-validation steps. 
 The results in \cite{YaoMullerWang2005} provide theoretical guarantees  for estimation of the covariance function and its eigenfunctions  under the condition that the previous approximation is sufficiently precise. 
 \cite{HallMullerWang2006} consider the same methodology and study the effect of the sampling rate on the estimation rate of the eigenfunctions. In a similar framework, \cite{BuneaXiao2013} propose a simpler procedure to estimate the eigenfunctions and obtain theoretical guarantees on the estimation error. 
 Their approach involves a dimension reduction step where the selection of the relevant eigenfunctions is performed by thresholding the eigenvalues of a correctly constructed empirical covariance matrix. In a similar setting, \cite{Bigot2010} consider the estimation of the covariance matrix of the process $S$ at sample points rather than that of the covariance function. This problem can be reduced to multivariate regression and  \cite{Bigot2010} develop a model selection approach to it resulting in some oracle inequalities.

Noteworthy, strong regularity conditions are usually imposed on the eigenfunctions $\f_k$ in the existing literature. In \cite{HallMullerWang2006} the eigenfunctions are assumed to 
admit bounded derivatives of order at least two. In addition, the optimal bandwidth choice in the local smoothing approach used in \cite{HallMullerWang2006,YaoMullerWang2005} requires the knowledge of smoothness degree of the eigenfunctions. In \cite{BuneaXiao2013}, the eigenfunctions are assumed to be continuously differentiable with bounded derivatives, the sequence of eigenvalues belongs to a Sobolev ball with regularity $\beta>0$ and the optimal choice of the threshold in the dimension reduction step depends on $\beta$. 

An interesting question is what are the optimal rates of estimation of the covariance function in a minimax sense. To our knowledge, it was not addressed in the literature.

In this paper, we assume that the trajectories $X_i(\cdot)$ are fully observed in time. Our aim is to understand the influence of the structure of the covariance function $K$ on the estimation rate. The main contributions of this paper are as follows:
\begin{enumerate}
\item We propose a simple data-driven procedure to estimate the covariance function and prove oracle inequalities for it based on recent results on high-dimensional matrix estimation.
\item We show that the proposed method is minimax optimal for estimation of $K$ in the $L_2$-norm whereas the empirical covariance estimator is suboptimal. 
\end{enumerate}

\section{Definitions and notations}\label{method}

Let $e_1(\cdot),e_2(\cdot),\ldots$ be an orthonormal basis of $L_2[0,1]$, which is assumed to be fixed throughout the paper. 
Denote by $\|\cdot\|_2$ the norms either of $L_2[0,1]$ or of  $L_2([0,1]\times [0,1])$ (according to the context) and by 
$\langle \cdot,\cdot \rangle$ the corresponding inner products. For any integer $l\geq 1$, 
consider the orthogonal projection $S^{(l)}= \sum_{k=1}^l \langle e_k,S\rangle e_k$  of $S$ onto the linear span of $\left\lbrace e_1,\dots,e_l \right\rbrace$.  Set
\begin{align}\label{eq:proj}
\dot{X}^{(l)}= \sum_{k=1}^l \int_0^1 e_k(t)dX(t)\ e_k, \ \ 
\dot{W}^{(l)}=\sum_{k=1}^l \int_0^1 e_k(t)dW(t)\ e_k.
\end{align}

In view of (\ref{model}), we have
\begin{align*}
\dot{X}^{(l)} = S^{(l)} + \sigma \dot{W}^{(l)}.
\end{align*}
Similarly to \eqref{eq:proj}, we define the processes
\begin{align*}
\dot{X}_i^{(l)}= \sum_{k=1}^l \int_0^1 e_k(t)dX_i(t)\ e_k,  \quad i=1,\dots, n,
\end{align*}
and consider
 the empirical covariance function
\begin{align*}
R_n^{(l)}(t,u) = \frac{1}{n}\sum_{i=1}^n \dot{X}^{(l)}_i(t)\dot{X}_i^{(l)}(u),\quad t,u\in [0,1].
\end{align*}
Note that the expectation of $R_n^{(l)}(t,u)$ is
\begin{align*}
\E \left[R_n^{(l)}(t,u) \right] &= \E \left[S^{(l)}(t) S^{(l)}(u)\right] + \sigma^2 I^{(l)}(t,u)\\
&= K^{(l)}(t,u) + \sigma^2 I^{(l)}(t,u),
\end{align*}
with $ I^{(l)}(t,u) = \sum_{k=1}^l e_k(t)e_k(u)$ and 
$$
K^{(l)}(t,u) = \E\left[S^{(l)}(t) S^{(l)}(u)\right] = \sum_{m=1}^r \l_m\f_m^{(l)}(t) \f_m^{(l)}(u)
$$ 
where $ \f_m^{(l)} = \sum_{k=1}^l \langle e_k,\f_m\rangle e_k$ is the orthogonal projection of $\f_m$ onto the linear span of 
$\left\lbrace e_1,\dots,e_l \right\rbrace$. In what follows, we will consider the set of functions
\begin{align*}
{\mathcal S}_l = \biggl\{\sum_{j,k=1}^l s_{jk}(e_j\otimes e_k): \ s_{jk}=s_{kj}, \ j,k=1,\dots, l\biggr\}
\end{align*}
where $(e_j\otimes e_k)(t,s)=e_j(t)e_k(s).$
The set ${\mathcal S}_l$ consists of all symmetric kernels belonging to the linear 
span of $\{e_j \otimes e_k: j,k=1,\dots, l\}.$ 
Note that $K$ is not necessarily in $\mathcal S_l$ while $R_n^{(l)},K^{(l)},I^{(l)}\in \mathcal S_{l}$. It is easy to see that $K^{(l)}$ is the orthogonal projection of $K$ onto $\mathcal S_l$.

If no ambiguity is caused, for any $A\in \mathcal S_l,$ we will use the same symbol $A$ to denote the corresponding symmetric $l\times l$ matrix. 
For any function $A\in \mathcal S_l$ or any $l\times l$ matrix $A$ we denote by  $\|A\|_1$ and $\|A\|_\infty$ its nuclear and spectral norms, respectively. The trace and the rank of matrix $A$ are denoted by ${\rm tr}(A)$ and ${\rm rank}(A)$, and its Frobenius norm by $\|A\|_F$. Writing $A\ge 0$ for a matrix $A$ means that $A$ is non-negative definite.


\section{Nuclear norm penalized estimator and its convergence rate}

In this section, we assume that the noise level $\sigma$ is known. For an integer $l\ge 1$, we define the estimator $\hat A^{(l)}$ of $K$ as a solution of the following penalized minimization problem
\begin{align}\label{Lasso}
\hat A^{(l)} \in \mathrm{argmin}_{A\in \mathcal S_l, A\geq 0} \left(\|R_n^{(l)}-A - \sigma^2  I^{(l)}\|_{2}^2 + \mu \|A\|_1 \right),
\end{align}
where $\mu>0$ is a regularization parameter to be tuned. Note that here we have $\|A\|_1 = \mathrm{tr}(A)$. The solution of \eqref{Lasso} is explicitly expressed via soft thesholding of the eigenvalues of the matrix $R_n^{(l)}- \sigma^2  I^{(l)}$ (cf. \cite{KoltLouTsy2011}).
The next theorem easily follows from the argument in the proof of Theorem 1 in \cite{KoltLouTsy2011} (see also \cite{lounici2014}).
\begin{theorem}\label{thm-main}
Let $n,l\geq 1$ be integers and let $X_1(\cdot),\dots,X_n(\cdot)$ be i.i.d. realizations of the process $X(\cdot)$ satisfying (\ref{model}). If $\mu \geq 2 \| R_n^{(l)}-K^{(l)} - \sigma^2  I^{(l)} \|_{\infty}$ then, for any $K$ satisfying \eqref{K} with $\sum_{k= 1}^r \lambda_k<\infty$ we have
\begin{align*}
\| \hat A^{(l)} - K \|_2^2 &\leq \inf_{A\in \mathcal S_l, A\geq 0} 
\left\lbrace \|A - K\|_2^2  + \min\left\lbrace  2\mu \|A\|_1, \frac{(1+\sqrt{2})^2}{8} \mu^2 \mathrm{rank}(A)  \right\rbrace  \right\rbrace.
\end{align*}
\end{theorem}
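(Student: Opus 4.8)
The plan is to reduce this theorem about covariance \emph{functions} to a known oracle inequality for low-rank matrix estimation, exploiting the identification between $\mathcal S_l$ and symmetric $l\times l$ matrices established in the preceding section. First I would observe that since $R_n^{(l)}, K^{(l)}, I^{(l)}, A$ all lie in $\mathcal S_l$, the entire optimization \eqref{Lasso} takes place in a finite-dimensional Hilbert space isometric to the space of symmetric $l\times l$ matrices equipped with the Frobenius inner product; under this isometry the $L_2([0,1]^2)$-norm $\|\cdot\|_2$ corresponds to $\|\cdot\|_F$, the nuclear and spectral function-norms correspond to the matrix $\|\cdot\|_1$ and $\|\cdot\|_\infty$, and the constraint $A\ge 0$ corresponds to positive semidefiniteness. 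Hence \eqref{Lasso} is literally the trace-penalized least-squares estimator analyzed in \cite{KoltLouTsy2011}, with ``observation matrix'' $R_n^{(l)}-\sigma^2 I^{(l)}$, ``target'' $K^{(l)}$, and ``noise'' $R_n^{(l)}-K^{(l)}-\sigma^2 I^{(l)}$.

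Next I would invoke the matrix oracle inequality from the proof of Theorem~1 in \cite{KoltLouTsy2011}. Under the stated condition $\mu\ge 2\|R_n^{(l)}-K^{(l)}-\sigma^2 I^{(l)}\|_\infty$, that argument yields, for every competitor $A\in\mathcal S_l$ with $A\ge 0$,
\begin{align*}
\|\hat A^{(l)}-K^{(l)}\|_2^2 \le \|A-K^{(l)}\|_2^2 + \min\left\lbrace 2\mu\|A\|_1,\ \frac{(1+\sqrt2)^2}{8}\mu^2\,\mathrm{rank}(A)\right\rbrace,
\end{align*}
where the two terms inside the minimum are the standard ``slow-rate'' bound (from the nuclear-norm penalty directly) and the ``fast-rate'' bound (from controlling only the rank of $A$ after bounding the penalty on the low-rank part); I would reproduce whichever of the two derivations is needed, both of which are by now routine convex-analysis arguments once the sharp noise control $\mu\ge 2\|\cdot\|_\infty$ is in force.

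The one genuinely non-routine step is passing from the error $\|\hat A^{(l)}-K^{(l)}\|_2^2$, which the matrix inequality controls, to the error $\|\hat A^{(l)}-K\|_2^2$ appearing in the statement, since the true target $K$ need not belong to $\mathcal S_l$. This is handled by the Pythagorean decomposition: because $K^{(l)}$ is the orthogonal projection of $K$ onto $\mathcal S_l$ (as noted in the excerpt) and both $\hat A^{(l)}$ and any competitor $A$ lie in $\mathcal S_l$, we have $\|\hat A^{(l)}-K\|_2^2=\|\hat A^{(l)}-K^{(l)}\|_2^2+\|K^{(l)}-K\|_2^2$ and likewise $\|A-K\|_2^2=\|A-K^{(l)}\|_2^2+\|K^{(l)}-K\|_2^2$. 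Adding the common projection-error term $\|K^{(l)}-K\|_2^2$ to both sides of the matrix oracle inequality therefore converts it exactly into the claimed bound with $K$ in place of $K^{(l)}$ and the infimum over $A\in\mathcal S_l,\ A\ge 0$ intact. I expect this orthogonality bookkeeping, rather than the matrix inequality itself, to be the main thing requiring care, since one must check that every object being compared to $K$ lives in $\mathcal S_l$ so that the cross terms vanish cleanly.
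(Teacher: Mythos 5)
Your proposal is correct and follows essentially the same route as the paper, which itself gives no written proof but simply defers to the argument of Theorem~1 in \cite{KoltLouTsy2011} after identifying $\mathcal S_l$ with symmetric $l\times l$ matrices. The one step the paper leaves implicit --- adding the common projection error $\|K^{(l)}-K\|_2^2$ to both sides via the Pythagorean identity, using that $K^{(l)}$ is the orthogonal projection of $K$ onto $\mathcal S_l$ while $\hat A^{(l)}$ and every competitor $A$ lie in $\mathcal S_l$ --- is exactly the step you identify and justify.
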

This theorem is a deterministic fact as soon as we have a proper bound on a single random variable, namely, the spectral norm $\| R_n^{(l)}-K^{(l)} - \sigma^2  I^{(l)} \|_{\infty}$. In other words, all stochastic effects in our problem are localized in the behaviour of this random variable and the choice of $\mu$ is driven by it as well. The next lemma provides a probabilistic bound on this random variable. 

\begin{lemma}\label{lem-sparse}
Let $n,l\geq 1$ be integers and let $X_1(\cdot),\dots,X_n(\cdot)$ be i.i.d. realizations of the process $X(\cdot)$ satisfying (\ref{model}).
Set
$
\lambda_{\max}=\sup_{1 \leq j \leq r}\lambda_j.
$
For any $t>0$ and $l\geq 1$, define 
\begin{align}\label{deltanlt}
\delta_n(l,t) = \max\left\lbrace \sqrt{\frac{l + t}{n}},\frac{l + t}{n} \right\rbrace.
\end{align}
Then, with probability at least $1-e^{-t}$, for any $K$ satisfying \eqref{K} with $\sum_{k= 1}^r \lambda_k<\infty$ we have
$$
\| R_n^{(l)}-K^{(l)} - \sigma^2  I^{(l)} \|_{\infty} \leq C(\lambda_{\max}+\sigma^2) \delta_n(l,t),
$$
for some absolute constant $C>0$.

\end{lemma}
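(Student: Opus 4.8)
The plan is to recognize $\| R_n^{(l)}-K^{(l)} - \sigma^2 I^{(l)} \|_{\infty}$ as the spectral-norm deviation of a sample covariance matrix of i.i.d. Gaussian vectors from its population counterpart, and then to invoke a standard non-asymptotic random matrix bound.

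First I would set up the matrix reformulation. For each $i$ identify the function $\dot X_i^{(l)}$ with the random vector $Y_i=(Y_{i,1},\dots,Y_{i,l})^\top\in\RR^l$ whose coordinates are $Y_{i,k}=\int_0^1 e_k(t)\,dX_i(t)$, so that, as an $l\times l$ matrix, $R_n^{(l)}=\frac1n\sum_{i=1}^n Y_iY_i^\top$. Each $Y_i$ is a linear functional of the Gaussian pair $(S_i,W_i)$, hence a centered Gaussian vector, and the $Y_i$ are i.i.d.\ because the $X_i$ are. A direct computation, using independence of signal and noise and orthonormality of $\{e_k\}$, gives $\E[Y_{i,j}Y_{i,k}]=\langle e_j\otimes e_k,K\rangle+\sigma^2\delta_{jk}=K^{(l)}_{jk}+\sigma^2\delta_{jk}$, so $Y_i\sim\cN(0,\Sigma)$ with $\Sigma=K^{(l)}+\sigma^2 I^{(l)}$. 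Thus the target quantity is exactly $\|R_n^{(l)}-\Sigma\|_\infty$. I would also record $\|\Sigma\|_\infty\le\lambda_{\max}+\sigma^2$: writing $K^{(l)}=\Phi\Lambda\Phi^\top$ with $\Phi_{km}=\langle e_k,\f_m\rangle$ and $\Lambda=\mathrm{diag}(\lambda_1,\dots,\lambda_r)$, the columns of $\Phi$ are truncations of the orthonormal system $\{\f_m\}$, so $\|\Phi\|_\infty\le 1$ and $\|K^{(l)}\|_\infty\le\lambda_{\max}$.

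Next I would whiten. Since $K^{(l)}\ge 0$ we have $\Sigma\ge\sigma^2 I^{(l)}$ and in particular $\Sigma$ is invertible, so the vectors $Z_i\egal\Sigma^{-1/2}Y_i$ are i.i.d.\ $\cN(0,I^{(l)})$. Then
\begin{align*}
R_n^{(l)}-\Sigma=\Sigma^{1/2}\Bigl(\tfrac1n\textstyle\sum_{i=1}^n Z_iZ_i^\top-I^{(l)}\Bigr)\Sigma^{1/2},
\end{align*}
whence $\|R_n^{(l)}-\Sigma\|_\infty\le\|\Sigma\|_\infty\,\bigl\|\frac1n\sum_{i=1}^n Z_iZ_i^\top-I^{(l)}\bigr\|_\infty$. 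Combined with the bound on $\|\Sigma\|_\infty$, it remains only to control the spectral deviation from the identity of the sample covariance of \emph{standard} Gaussian vectors.

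For this core estimate let $G$ be the $n\times l$ matrix with rows $Z_i^\top$, so that its entries are i.i.d.\ $\cN(0,1)$ and $\frac1n\sum_i Z_iZ_i^\top=\frac1n G^\top G$. Writing $s_{\min}(G),s_{\max}(G)$ for the extreme singular values of $G$,
\begin{align*}
\Bigl\|\tfrac1n G^\top G-I^{(l)}\Bigr\|_\infty=\max\Bigl\{\tfrac{s_{\max}(G)^2}{n}-1,\ 1-\tfrac{s_{\min}(G)^2}{n}\Bigr\}.
\end{align*}
I would then use the classical Gaussian bounds $\E\,s_{\max}(G)\le\sqrt n+\sqrt l$ and $\E\,s_{\min}(G)\ge\sqrt n-\sqrt l$, together with the fact that $G\mapsto s_{\max}(G)$ and $G\mapsto s_{\min}(G)$ are $1$-Lipschitz in the Frobenius metric; Gaussian concentration then yields, with probability at least $1-2e^{-t}$, the two-sided bound $\sqrt n-\sqrt l-\sqrt{2t}\le s_{\min}(G)\le s_{\max}(G)\le\sqrt n+\sqrt l+\sqrt{2t}$. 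Squaring, dividing by $n$, and expanding produces $\bigl\|\frac1n G^\top G-I^{(l)}\bigr\|_\infty\le C\delta_n(l,t)$ for an absolute $C$; multiplying by $\|\Sigma\|_\infty\le\lambda_{\max}+\sigma^2$ gives the claim (the passage from probability $1-2e^{-t}$ to $1-e^{-t}$ costs only a harmless adjustment of $C$). The main obstacle is precisely this last step: verifying that the singular-value concentration collapses to the exact form $\max\{\sqrt{(l+t)/n},(l+t)/n\}$ uniformly across the regimes $l+t\le n$ and $l+t>n$, where the squared terms $(\sqrt l+\sqrt{2t})/\sqrt n$ and $(\sqrt l+\sqrt{2t})^2/n$ alternately dominate. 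The remaining ingredients — the Gaussian identification of $Y_i$, the covariance computation, and the whitening reduction — are routine once the random-matrix bound is in hand.
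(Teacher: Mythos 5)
Your proof is correct and follows essentially the same route as the paper: identify $R_n^{(l)}-K^{(l)}-\sigma^2 I^{(l)}$ with the deviation $\hat B_{n,l}-B_l$ of a Gaussian sample covariance matrix, whiten via $B_l^{1/2}$ and pull out $\|B_l\|_\infty\le\lambda_{\max}+\sigma^2$, then control $\|\frac1n\sum_i Z_iZ_i^\top-I^{(l)}\|_\infty$. The only difference is that where the paper cites Theorem 5.39 of Vershynin for this last step, you reprove it in the Gaussian case via Gordon's singular-value bounds and Gaussian concentration --- which is exactly the standard proof of that theorem, so the argument is the same in substance.
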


\begin{proof}
Set ${\rm x}_i (l)= (\int_0^1 e_1(t)dX_i(t),\ldots,\int_0^1 e_l(t)dX_i(t) )^\top$ for any $1\leq i \leq n$ and $\hat B_{n,l} = \frac{1}{n} \sum_{i=1}^n {\rm x}_i (l){\rm x}_i(l)^\top$.  Note that ${\rm x}_i(l)$ are i.i.d. normal random vectors with mean 0 and covariance matrix  $B_l= K^{(l)} + \sigma^2  I^{(l)}$. Also $\|R_n^{(l)}-K^{(l)} - \sigma^2  I^{(l)}\|_{\infty}= \|\hat B_{n,l} - B_l\|_{\infty}$. Here, $I^{(l)}$ is the $l\times l$ identity matrix. 
Next, 
$$
\|\hat B_{n,l} - B_l\|_\infty \leq \|B_l\|_\infty \left\|\frac{1}{n}\sum_{i=1}^n Z_i Z_i^\top - I^{(l)}\right\|_\infty \leq (\lambda_{\max}+\sigma^2)\left\|\frac{1}{n}\sum_{i=1}^n Z_i Z_i^\top - I^{(l)}\right\|_\infty 
$$
where $Z_1,\ldots,Z_n$ are i.i.d. standard normal vectors in $\R^l$. Here we also used the fact that the following 
representation holds for random vectors ${\rm x}_i (l):$ ${\rm x}_i (l)=B_l^{1/2}Z_i.$ 
Applying Theorem 5.39 in \cite{Vershynin} to the random variable $\left \|\frac{1}{n}\sum_{i=1}^n Z_i Z_i^\top - I^{(l)}\right\|_\infty$ we get the result.
\end{proof}

Theorem \ref{thm-main} with Lemma \ref{lem-sparse} immediately imply the following result.
\begin{theorem}\label{cor-main}
Let $n,l\geq 1$ be integers and let $X_1(\cdot),\dots,X_n(\cdot)$ be i.i.d. realizations of the process $X(\cdot)$ satisfying (\ref{model}).
Take
$$
\mu = c (\lambda_{\max}+\sigma^2)\delta_{n}(l,t),
$$
for some sufficiently large absolute constant $c>0$. Define
$$
v_{n}(A,l,t)=\min\left\lbrace  (\lambda_{\max}+\sigma^2)\mathrm{tr}(A) \delta_{n}(l,t),   (\lambda_{\max}+\sigma^2) ^2 \mathrm{rank}(A)\delta_{n}^2(l,t) \right\rbrace.
$$
Let $t>0$. Then, with probability at least $1-e^{-t}$, for any $K$ satisfying \eqref{K} with $\sum_{k= 1}^r \lambda_k<\infty$ we have
\begin{align}\label{oracle}
\| \hat A^{(l)} - K \|_2^2 &\leq  \inf_{A\in \mathcal S_l, A\geq 0} 
\left\lbrace \|A - K\|_2^2  + C v_{n}(A,l,t) \right\rbrace
\end{align}
with some absolute constant $C>0$. 
\end{theorem}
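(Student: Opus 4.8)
\emph{Proof plan.} Theorem \ref{thm-main} is a purely deterministic statement once the threshold condition $\mu \ge 2\|R_n^{(l)} - K^{(l)} - \sigma^2 I^{(l)}\|_\infty$ is granted, and Lemma \ref{lem-sparse} controls precisely this random variable; as emphasized in the remark after Theorem \ref{thm-main}, all the stochastics sit in this one spectral norm. The plan is thus to (i) pass to the high-probability event of Lemma \ref{lem-sparse}, (ii) check that the prescribed $\mu$ meets the hypothesis of Theorem \ref{thm-main} there, and (iii) substitute $\mu$ into the deterministic bound and reconcile it with $v_n(A,l,t)$.

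For step (i), I would fix $t>0$ and apply Lemma \ref{lem-sparse} to obtain an event $\Omega_t$ of probability at least $1-e^{-t}$ on which, for any $K$ satisfying \eqref{K} with $\sum_k \lambda_k<\infty$,
$$
\|R_n^{(l)} - K^{(l)} - \sigma^2 I^{(l)}\|_\infty \le C_0(\lambda_{\max}+\sigma^2)\delta_n(l,t),
$$
$C_0$ being the absolute constant of the lemma; the rest of the argument lives on $\Omega_t$. For step (ii), it suffices to take the absolute constant $c$ in $\mu = c(\lambda_{\max}+\sigma^2)\delta_n(l,t)$ with $c \ge 2C_0$: the displayed bound then forces $\mu \ge 2\|R_n^{(l)} - K^{(l)} - \sigma^2 I^{(l)}\|_\infty$ on $\Omega_t$, so the hypothesis of Theorem \ref{thm-main} is satisfied and its oracle inequality is in force.

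For step (iii), I would insert $\mu = c(\lambda_{\max}+\sigma^2)\delta_n(l,t)$, together with $\|A\|_1 = \mathrm{tr}(A)$ (valid since $A \ge 0$), into
$$
\min\left\{2\mu\|A\|_1,\ \tfrac{(1+\sqrt2)^2}{8}\mu^2\,\mathrm{rank}(A)\right\}.
$$
The first branch becomes $2c(\lambda_{\max}+\sigma^2)\delta_n(l,t)\,\mathrm{tr}(A)$ and the second $\tfrac{(1+\sqrt2)^2}{8}c^2(\lambda_{\max}+\sigma^2)^2\delta_n^2(l,t)\,\mathrm{rank}(A)$, i.e. absolute-constant multiples of exactly the two quantities whose minimum is $v_n(A,l,t)$. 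The only step that is not wholly mechanical, and the one I would treat carefully, is that the two branches scale with different powers of $c$ (linear versus quadratic), so no single constant factors out of the minimum directly. The clean fix is to set $C := \max\{2c,\ (1+\sqrt2)^2 c^2/8\}$ and observe that each branch is bounded by $C$ times its matching factor in $v_n$; since dominating each argument of a minimum dominates the minimum, this yields $\min\{\cdots\} \le C\,v_n(A,l,t)$. Taking the infimum over $A\in\mathcal{S}_l$, $A\ge 0$ (the event $\Omega_t$ not depending on $A$) gives \eqref{oracle} on $\Omega_t$, and since $\PP(\Omega_t)\ge 1-e^{-t}$ the proof is finished.
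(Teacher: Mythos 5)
Your proposal is correct and matches the paper's (implicit) argument: the paper simply states that Theorem \ref{thm-main} together with Lemma \ref{lem-sparse} immediately imply the result, which is exactly the three-step combination you spell out. Your extra care about the two branches of the minimum scaling differently in $c$ is a valid and welcome detail that the paper leaves unstated.
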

The bound \eqref{oracle} is the main oracle inequality that we will use now to obtain minimax bounds on the risk of the estimator $ \hat A^{(l)}$. It is easy to check that 
$$
v_{n}(A,l,t)\leq (\lambda_{\max}+\sigma^2)^2 {\rm rank}(A)\frac{l+t}{n}.
$$
The above bound is trivial if $l+t\leq n.$ In the case $l+t>n,$ it follows from the bound 
$$
(\lambda_{\max}+\sigma^2){\rm tr}(A)\frac{l+t}{n} \leq  (\lambda_{\max}+\sigma^2)\lambda_{\max}{\rm rank}(A)\frac{l+t}{n}
\leq (\lambda_{\max}+\sigma^2)^2{\rm rank}(A)\frac{l+t}{n}.
$$
Combining Theorem \ref{cor-main} with the fact that, for a random variable $\eta$, $\E[|\eta|] = \int_{0}^\infty \mathbb P(|\eta|\geq t)dt$  and taking $A= K^{(l)}$, 
\begin{align}\label{borne lasso r-fini}
\E[\| \hat A^{(l)} - K \|_2^2 ] \leq \|K^{(l)} - K\|_2^2 + C (\lambda_{\max}+\sigma^2)^2 \frac{(r\wedge l)l}{n}
\end{align}
for some absolute constant $C>0$, where we have used that  $\mathrm{rank}(K^{(l)})\le r\wedge l$. This inequality is valid for all $K$ of the form \eqref{K}, with finite or infinite $r$. 

As a corollary, we get the following bound on the minimax risk over the class of covariance functions that admit a finite expansion with respect to the basis~$\{e_k\}$. Denote by ${\mathcal K}_{r,l}(\l_{\max})$ the class of all covariance functions  satisfying \eqref{K} such that
$K\in \mathcal S_l$ and  $\|K\|_\infty \le \l_{\max}$ where $\l_{\max}$ is a finite positive constant. Note that the system of functions $\{\f_k\}$ in this definition is not fixed and varies among all orthonormal systems in $L_2[0,1]$.
\begin{corollary}\label{cor:finite}
Under the assumptions of Theorem~\ref{cor-main}, we have
\begin{align*}\label{}
\sup_{K\in{\mathcal K}_{r,l}(\l_{\max}) }\E[\| \hat A^{(l)} - K \|_2^2 ] \leq C (\lambda_{\max}+\sigma^2)^2 \frac{(r\wedge l)l}{n}
\end{align*}
for some absolute constant $C>0$. 
\end{corollary}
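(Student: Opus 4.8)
The plan is to read off Corollary~\ref{cor:finite} as an immediate specialization of the already-established bound \eqref{borne lasso r-fini} to covariance functions that happen to lie in $\mathcal S_l$. The key observation is that \eqref{borne lasso r-fini} was derived for \emph{arbitrary} $K$ of the form \eqref{K} with $\sum_k\lambda_k<\infty$, so in particular it applies to every member of the class $\mathcal K_{r,l}(\lambda_{\max})$; the only task is to check that for such $K$ the approximation term disappears and that the prefactor is controlled uniformly over the class.

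First I would note that, for any $K\in\mathcal K_{r,l}(\lambda_{\max})$, the defining requirement $K\in\mathcal S_l$ means that $K$ equals its own orthogonal projection onto $\mathcal S_l$. Since $K^{(l)}$ is precisely that projection (as recorded in Section~\ref{method}), we get $K^{(l)}=K$, hence the bias term vanishes, $\|K^{(l)}-K\|_2^2=0$. Plugging this into \eqref{borne lasso r-fini} gives, for each fixed such $K$,
\[
\E[\|\hat A^{(l)}-K\|_2^2]\le C(\lambda_{\max}+\sigma^2)^2\frac{(r\wedge l)l}{n},
\]
where the $\lambda_{\max}$ appearing here is the value $\sup_{1\le j\le r}\lambda_j$ attached to this particular $K$, and the rank factor is already accounted for since $K=\sum_{k=1}^r\lambda_k\,\varphi_k\otimes\varphi_k$ is an $l\times l$ matrix of rank at most $r\wedge l$.

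The one point requiring care is reconciling the two roles of the symbol $\lambda_{\max}$: in \eqref{borne lasso r-fini} it denotes the largest eigenvalue $\sup_{1\le j\le r}\lambda_j$ of the specific $K$, whereas in the definition of $\mathcal K_{r,l}(\lambda_{\max})$ it is the prescribed constant in the constraint $\|K\|_\infty\le\lambda_{\max}$. Because $K\ge 0$ has eigenvalues $\{\lambda_j\}$, one has $\|K\|_\infty=\sup_{1\le j\le r}\lambda_j$, so the class constraint reads exactly $\sup_{1\le j\le r}\lambda_j\le\lambda_{\max}$; monotonicity of $x\mapsto(x+\sigma^2)^2$ then lets me replace the true largest eigenvalue by the class bound, only enlarging the right-hand side. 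Since the resulting estimate is uniform in $K$, taking the supremum over $K\in\mathcal K_{r,l}(\lambda_{\max})$ yields the claim. I expect no genuine analytic obstacle here: the entire content sits in \eqref{borne lasso r-fini}, and the argument reduces to the two bookkeeping facts $K^{(l)}=K$ and $\|K\|_\infty=\sup_{1\le j\le r}\lambda_j$.
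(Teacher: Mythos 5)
Your proposal is correct and matches the paper's (implicit) argument exactly: the corollary is stated as an immediate consequence of \eqref{borne lasso r-fini}, obtained by noting that $K\in\mathcal S_l$ forces $K^{(l)}=K$ so the bias term vanishes, and that $\|K\|_\infty=\sup_j\lambda_j\le\l_{\max}$ lets one replace the eigenvalue of the particular $K$ by the class constant uniformly. Nothing further is needed.
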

It is interesting to compare the estimator $\hat A^{(l)}$ with the other natural estimator, which is the corrected empirical covariance function $$\bar A^{(l)} \triangleq R_n^{(l)} - \sigma^{2}I^{(l)}.$$
We have the following expression for the risk of $\bar A^{(l)}$.
\begin{proposition}\label{prop:empir_covar} For any $K$ satisfying \eqref{K} with $\sum_{k= 1}^r \lambda_k<\infty$ we have
\begin{align*}
\E[\| \bar A^{(l)} - K \|_2^2 ]  = \|K^{(l)} - K\|_2^2 +\frac{\|B_l\|_2^2 + [\tr(B_l)]^2}{n}
\end{align*}
where $B_l= K^{(l)} + \sigma^2  I^{(l)}$.
\end{proposition}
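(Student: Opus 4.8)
The plan is to split the risk into a deterministic approximation term and a stochastic fluctuation term, and then to reduce the latter to a standard Gaussian fourth-moment computation in the matrix representation. First I would write $\bar A^{(l)}-K=(\bar A^{(l)}-K^{(l)})+(K^{(l)}-K)$. Since $K^{(l)}$ is the orthogonal projection of $K$ onto $\mathcal S_l$ and both $\bar A^{(l)}$ and $K^{(l)}$ belong to $\mathcal S_l$, the increment $\bar A^{(l)}-K^{(l)}$ lies in $\mathcal S_l$ while $K^{(l)}-K$ is orthogonal to $\mathcal S_l$; the two are therefore orthogonal in $L_2([0,1]\times[0,1])$ for every realization of the data. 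By the Pythagorean identity the cross term vanishes pointwise, so after taking expectations
\[
\E[\|\bar A^{(l)}-K\|_2^2]=\|K^{(l)}-K\|_2^2+\E[\|\bar A^{(l)}-K^{(l)}\|_2^2],
\]
which already isolates the first summand of the claimed formula.

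Next I would pass to the $l\times l$ matrices. Because $\{e_j\otimes e_k:1\le j,k\le l\}$ is orthonormal in $L_2([0,1]\times[0,1])$, the $L_2$-norm of any element of $\mathcal S_l$ equals the Frobenius norm of its coefficient matrix. Identifying $\bar A^{(l)}$ with $\hat B_{n,l}-\sigma^2 I^{(l)}$ and $K^{(l)}$ with $B_l-\sigma^2 I^{(l)}$, the terms $\sigma^2 I^{(l)}$ cancel, so $\bar A^{(l)}-K^{(l)}$ corresponds to $\hat B_{n,l}-B_l$. Hence, using that both matrices are symmetric,
\[
\E[\|\bar A^{(l)}-K^{(l)}\|_2^2]=\E[\|\hat B_{n,l}-B_l\|_F^2]=\E\big[\tr\big((\hat B_{n,l}-B_l)^2\big)\big].
\]

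The only genuine computation is to evaluate this last expectation entrywise. Writing $M=\hat B_{n,l}-B_l$, its $(j,k)$ entry is $\frac1n\sum_{i=1}^n\big(x_{ij}x_{ik}-(B_l)_{jk}\big)$, where $x_i={\rm x}_i(l)\sim\cN(0,B_l)$ are i.i.d.\ and $x_{ij}$ denotes the $j$-th coordinate of $x_i$. Since $\E M=0$ and the $x_i$ are independent,
\[
\E[\tr(M^2)]=\sum_{j,k=1}^l\E[M_{jk}^2]=\frac1n\sum_{j,k=1}^l\mathrm{Var}(x_{1j}x_{1k}).
\]
For a centered Gaussian vector Isserlis' (Wick's) formula gives $\E[x_jx_kx_px_q]=(B_l)_{jk}(B_l)_{pq}+(B_l)_{jp}(B_l)_{kq}+(B_l)_{jq}(B_l)_{kp}$, whence $\mathrm{Var}(x_jx_k)=(B_l)_{jj}(B_l)_{kk}+(B_l)_{jk}^2$. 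Summing over $j,k$ turns the first term into $[\tr(B_l)]^2$ and the second into $\|B_l\|_2^2$, yielding $\frac{\|B_l\|_2^2+[\tr(B_l)]^2}{n}$, which combined with the approximation term gives the stated identity.

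I expect the fourth-moment evaluation to be the heart of the argument, but it is entirely routine through Wick's theorem and poses no real obstacle. The two points that require a little care are the orthogonal decomposition in the first step — needed precisely because $K$ itself need not lie in $\mathcal S_l$ — and the cancellation of the $\sigma^2 I^{(l)}$ correction when reducing to $\hat B_{n,l}-B_l$.
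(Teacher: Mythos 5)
Your proof is correct and follows essentially the same route as the paper: the same decomposition $\E\|\bar A^{(l)}-K\|_2^2=\|K^{(l)}-K\|_2^2+\E\|\hat B_{n,l}-B_l\|_F^2$ (you justify the vanishing cross term by pointwise orthogonality to $\mathcal S_l$, the paper by $\E[\bar A^{(l)}]=K^{(l)}$; both are valid), followed by the Gaussian fourth-moment computation. The paper organizes that last step as $\frac1n\bigl(\E|{\rm x}_1|_2^4-\tr(B_l^2)\bigr)$ rather than entrywise via Wick's formula, but this is the same calculation.
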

\begin{proof}
Set for brevity $B=B_l$, $ \hat B_n= \hat B_{n,l}$, ${\rm x}_i={\rm x}_i(l)$. Note that $\E(\bar A^{(l)}) = K^{(l)}$. 
The bias-variance decomposition of the risk of $\bar A^{(l)}$ yields
\begin{align*}\label{}
\E[\| \bar A^{(l)}  - K\|_2^2 ] & =  \|K^{(l)} - K\|_2^2 + \E[\|R_n^{(l)} - \E(R_n^{(l)})\|_2^2].
\end{align*}
Here, $\E[\|R_n^{(l)} - \E(R_n^{(l)})\|_2^2] =\E[\| \hat B_n - B\|_F^2] =   \E\big[\big\|\frac1n \sum_{i=1}^n W_i\big\|_F ^2\big]$ where $W_i ={\rm x}_i {\rm x}_i^\top - \E [{\rm x}_i {\rm x}_i^\top]$. Since the matrices $W_i$ are i.i.d. we find $ \E\big[\big\|\frac1n \sum_{i=1}^n W_i\big\|_F ^2\big]=
\E \,\tr \big(\frac1{n^2} \sum_{i,j=1}^n W_i^\top W_j \big) = \frac1n \tr\big(\E (W_1^\top W_1)\big)= \frac1n \big(\E (|{\rm x}_1|_2^4) - \tr(B^\top B)\big)$
where $|\cdot|_2$ denotes the Euclidean norm. Here, $\E (|{\rm x}_1|_2^4) - \tr(B^\top B) = \|B\|_2^2 + [\tr(B)]^2$ and the result follows.
\end{proof}
Since $\tr (B)\ge \s^2 l$, Proposition~\ref{prop:empir_covar} implies
\begin{align}\label{lower_empir0}
\E[\| \bar A^{(l)} - K \|_2^2 ]  &\ge \|K^{(l)} - K\|_2^2 + \frac{ \sigma^4 l^2}{n},\\
\inf_{K }\E[\| \bar A^{(l)} - K \|_2^2 ] &\geq \frac{ \sigma^4 l^2}{n}\label{lower_empir}
\end{align} 
where $\inf_{K }$ is the infimum over all $K$ satisfying \eqref{K} with $\sum_{k= 1}^r \lambda_k<\infty$. Comparing \eqref{lower_empir}
with Corollary~\ref{cor:finite} we see that the risk of the empirical estimator $\bar A^{(l)}$ on the class ${\mathcal K}_{r,l}$ is of the order greater than the risk of our estimator $\hat A^{(l)}$ when $r$ is smaller than $l$.

Our estimator also outperforms the estimator $\bar A^{(l)}$ for kernels $K$ that do not admit a finite expansion with respect to the basis $\{e_k\},$ but satisfy some regularity conditions. To this end, we introduce a specific norm that can be naturally interpreted as a version of the Sobolev norm for covariance functions. Fix the smoothness parameter $s>0$. For any symmetric function $K \,:\, [0,1]^2 \rightarrow \R$, we define
\begin{align*}
\|K\|_{s,2} &:= \|\Delta^s K\|_2 =  
\left( \sum_{k, k'\geq 1} k^{2s} \langle Ke_k, e_{k'} \rangle^2      \right)^{1/2},
\end{align*}
where $\Delta$ is an operator admitting the matrix representation $\mathrm{diag}(1,2,\cdots,k,\cdots)$ w.r.t the basis $(e_k)_{k\geq 1}$. Note that the norm $\|K\|_{s,2}$ depends on the basis $\{e_k\}$ but we do not indicate this dependence in the notation since $\{e_k\}$ is fixed.
Note also that if $K$ admits spectral representation (\ref{K}), then 
$$
\|K\|_{s,2}=\left(\mathrm{tr}(\Delta^{2s} K^2)\right)^{1/2} = \left(\sum_{k=1}^r \lambda_k^2 \|\varphi_k\|_{s,2}^2\right)^{1/2}, 
$$
where we use the notation 
$$
\|\varphi\|_{s,2}= \|\Delta^s \varphi\|  = \left( \sum_{k\geq 1} k^{2 s} \langle \varphi,e_k  \rangle^2 \right)^{1/2} 
$$
for a Sobolev type norm of a function $\varphi\,:\, [0,1] \rightarrow \R.$

\begin{assum}
\label{smooth_kernel}
Suppose the covariance function $K$ has finite rank $r$ and there exist constants $\l_{\max}>0$, $s>0$ and $\rho\geq 1$ such that $\|K\|_{\infty}\leq \lambda_{\max}$ and $\|K\|_{s,2}\leq \rho$.
\end{assum}

Denote by $\overline{\mathcal K}_r(s,\rho;\lambda_{\max})$ the class of all 
kernels $K$ satisfying Assumption \ref{smooth_kernel}.
\begin{theorem}
\label{th:smooth}
Given $r\geq 1, s>0, \rho>0$ and $\lambda_{\max}>0,$
set 
$$
\ell := \max\left(\left\lceil \left(\frac{\rho^2}{(\lambda_{\max}+\sigma^2)^2}\frac{n}{r}\right)^{1/(2s+1)}\right\rceil, 
\left\lceil \left(\frac{\rho^2 n}{(\lambda_{\max}+\sigma^2)^2}\right)^{1/(2s+2)}\right\rceil\right).
$$
Then, with some absolute constant $C>0,$ 
\begin{align}
\label{eq:th:smooth}
&\sup_{K\in \overline{\mathcal K}_r(s,\rho;\lambda_{\max})} \E[\| \hat A^{(\ell)} - K \|_2^2 ] \leq 
\\
& 
\nonumber
C  \min
\left((\lambda_{\max}+\sigma^2)^{4s/(2s+1)}\rho^{2/(2s+1)} \left(\frac{r}{n}\right)^{2s/(2s+1)}, 
(\lambda_{\max}+\sigma^2)^{2s/(s+1)}\rho^{2/(s+1)} n^{-s/(s+1)}\right). 
\end{align}
\end{theorem}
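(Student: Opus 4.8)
The plan is to feed the prescribed truncation level into the expectation bound \eqref{borne lasso r-fini} and to convert the smoothness constraint $\|K\|_{s,2}\le\rho$ into a decay estimate for the bias. Throughout, write $a=(\lambda_{\max}+\sigma^2)^2$, and let $\ell_1,\ell_2$ denote the two quantities (before taking the maximum) entering the definition of $\ell$. Since every $K\in\overline{\mathcal K}_r(s,\rho;\lambda_{\max})$ has rank $r$, so that $\mathrm{rank}(K^{(l)})\le r\wedge l$, applying \eqref{borne lasso r-fini} at an arbitrary integer $l\ge 1$ gives
\begin{align*}
\E[\|\hat A^{(l)}-K\|_2^2]\le \|K^{(l)}-K\|_2^2 + C\,a\,\frac{(r\wedge l)\,l}{n}.
\end{align*}
The stochastic part of the argument is therefore already finished; what remains is entirely deterministic.

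First I would bound the bias. Because $K^{(l)}$ is the orthogonal projection of $K$ onto $\mathcal S_l$, we have $\|K^{(l)}-K\|_2^2=\sum_{\max(j,k)>l}\langle Ke_k,e_j\rangle^2$. Using the symmetry $\langle Ke_k,e_j\rangle=\langle Ke_j,e_k\rangle$ one can rewrite the Sobolev-type norm as $\|K\|_{s,2}^2=\frac12\sum_{j,k\ge1}(j^{2s}+k^{2s})\langle Ke_k,e_j\rangle^2$, and whenever $\max(j,k)>l$ the weight satisfies $j^{2s}+k^{2s}>l^{2s}$. Hence
\begin{align*}
\|K^{(l)}-K\|_2^2\le \frac{1}{l^{2s}}\sum_{\max(j,k)>l}(j^{2s}+k^{2s})\langle Ke_k,e_j\rangle^2\le \frac{2\|K\|_{s,2}^2}{l^{2s}}\le \frac{2\rho^2}{l^{2s}},
\end{align*}
uniformly over the class. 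Substituting this yields the working bound $\sup_K\E[\|\hat A^{(l)}-K\|_2^2]\le 2\rho^2 l^{-2s}+C a (r\wedge l)l/n$, a clean bias--variance tradeoff.

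It then remains to check that $\ell=\max(\ell_1,\ell_2)$ realizes the minimum of the two target rates. I would split according to whether $\rho^2 n/a\ge r^{2s+2}$ or not; a one-line computation shows this is exactly the condition $\ell_1\ge\ell_2$, and simultaneously exactly the condition $\ell_1\ge r$. In the first regime $\ell=\ell_1\ge r$, so $r\wedge\ell=r$, and the choice $\ell_1$, of order $(\rho^2 n/(ar))^{1/(2s+1)}$, balances $2\rho^2\ell^{-2s}$ against $ar\ell/n$, both of order $(\lambda_{\max}+\sigma^2)^{4s/(2s+1)}\rho^{2/(2s+1)}(r/n)^{2s/(2s+1)}$; moreover this is precisely the regime in which this first rate is the smaller of the two. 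In the second regime $\ell=\ell_2<r$, so $r\wedge\ell=\ell$, and $\ell_2$, of order $(\rho^2 n/a)^{1/(2s+2)}$, balances $2\rho^2\ell^{-2s}$ against $a\ell^2/n$, both of order $(\lambda_{\max}+\sigma^2)^{2s/(s+1)}\rho^{2/(s+1)}n^{-s/(s+1)}$, which is the smaller rate there. In each regime the attained rate equals the relevant minimum, which is the asserted bound \eqref{eq:th:smooth}.

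The routine but slightly delicate points, which I would treat as the main obstacle, are: (i) confirming the equivalence $\ell_1\ge\ell_2\Leftrightarrow\ell_1\ge r$, which is what makes the factor $r\wedge\ell$ collapse to the correct value in each regime and is the genuine reason the \emph{maximum} (rather than a single choice) is the right truncation level; and (ii) absorbing the ceiling functions into the constants, noting that $\lceil x\rceil\le 2x$ for $x\ge1$ inflates the variance term only by an absolute factor, while rounding up can only decrease the bias term.
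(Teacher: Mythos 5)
Your proposal is correct and follows essentially the same route as the paper: plug the prescribed $l$ into \eqref{borne lasso r-fini} and bound the bias by $2\rho^2 l^{-2s}$ using the Sobolev-type norm (your symmetrized weighting $\frac12(j^{2s}+k^{2s})$ is just a repackaging of the paper's two-sum splitting). The only difference is that you spell out the case analysis $\ell_1\ge\ell_2\Leftrightarrow\ell_1\ge r\Leftrightarrow\rho^2 n\ge(\lambda_{\max}+\sigma^2)^2 r^{2s+2}$, which the paper compresses into the remark that the minimum is achieved for $l$ of the order of $\ell$; your verification of that step is accurate.
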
 
 
\begin{proof}
Since $K$ satisfies Assumption \ref{smooth_kernel}, we have for any $l\geq 1$ that
$$
\|K - K^{(l)}\|_2^2  = 
\sum_{k\geq l+1} \sum_{k'=1}^{\infty}\langle Ke_k, e_{k'}\rangle^2 + \sum_{k'\geq l+1}\sum_{k=1}^l \langle Ke_k, e_{k'}\rangle^2 
$$
$$
\leq (l+1)^{-2s}\sum_{k\geq l+1} \sum_{k'=1}^{\infty}k^{2s}\langle Ke_k, e_{k'}\rangle^2 
+ (l+1)^{-2s}\sum_{k'\geq l+1} \sum_{k=1}^{\infty}(k')^{2s}\langle Ke_k, e_{k'}\rangle^2\leq 2\rho^2 l^{-2s}.
$$
Combining the previous display with (\ref{borne lasso r-fini}), we find that, for any $l\ge1$,
\begin{align*}\label{}
\E[\| \hat A^{(l)} - K \|_2^2 ] \leq \rho^2 l^{-2s} + C (\lambda_{\max}+\sigma^2)^2 \frac{(r\wedge l)l}{n}\,.
\end{align*}
The minimum of the right-hand side of this inequality is achieved for $l$ of the order of $\ell$. By setting $l=\ell,$ we obtain  \eqref{eq:th:smooth}. 
\end{proof}

Note that, if the rank $r$ is small, the problem of estimation of covariance function $K$ reduces to estimation 
of a small number $r$ of eigenfunctions and eigenvalues of $K.$
The rate in (\ref{eq:th:smooth}) is, in this case, of the order $O(n^{-2s/(2s+1)}),$
which coincides with a standard minimax error rate of estimation of a function of one 
variable of smoothness $s.$  On the other hand, when the rank $r$ is large (say, $r=+\infty$), 
the estimation error rate becomes $O(n^{-s/(s+1)}),$ which is the minimax rate of estimation 
of a function of two variables of smoothness $s.$ Similar error rates where studied earlier 
in matrix completion problems for smooth kernels on graphs (see \cite{Kolt}).

We consider now a class of kernels determined by the following assumption, which can be interpreted as a Sobolev type condition on the individual eigenfunctions $\varphi_j$. 

\begin{assum}\label{bias-cond}
The value $r$ is finite and there exist constants $s>0$, $c_*>0$ such that, for any $1\leq j \leq r$,
$
\|\varphi_j\|_{s,2} \leq c_*.
$
\end{assum}
Denote by $\cK_{r}(s,c_*;\l_{\max})$ the class of all kernels $K$ defined by \eqref{K} with eigenfunctions $\f_j$ satisfying Assumption~\ref{bias-cond} and such that $\|K\|_\infty < \l_{\max}$.  
\begin{theorem}\label{th:nonpar}
Let $l_1 =\max \big( \lceil n^{\frac{1}{2s+1}}\rceil, \, \lceil (rn)^{\frac{1}{2(s+1)}}\rceil \big)$, $n\ge 1$, $1\le r<\infty$. For any $s>0$, $c_*>0$, $\l_{\max}>0$ we have 
\begin{align}\label{eq:th:nonpar}
\sup_{K\in \cK_{r}(s,c_*;\l_{\max})} \E[\| \hat A^{(l_1)} - K \|_2^2 ] \leq C  \min\big(rn^{-\frac{2s}{2s+1}}, \, r^{\frac{1}{s+1}}n^{-\frac{s}{s+1}}\big)
\end{align}
where $C>0$ is a constant depending only on $\lambda_{\max}, \sigma$ and $c_*$.
\end{theorem}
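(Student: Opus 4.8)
The plan is to feed Assumption~\ref{bias-cond} into the bias term of the oracle inequality \eqref{borne lasso r-fini} and then to optimize the resulting bound at $l=l_1$. Recall that \eqref{borne lasso r-fini} holds for every $l\ge1$ and reads
$$
\E[\| \hat A^{(l)} - K \|_2^2 ] \leq \|K^{(l)} - K\|_2^2 + C (\lambda_{\max}+\sigma^2)^2 \frac{(r\wedge l)l}{n},
$$
so the only quantity that feels the new hypothesis is the bias $\|K^{(l)}-K\|_2^2$. The whole point is therefore to show that the per-eigenfunction smoothness condition $\|\f_j\|_{s,2}\le c_*$ produces a bias of order $r\,l^{-2s}$, the extra factor $r$ (compared with Theorem~\ref{th:smooth}) being exactly what converts the global Sobolev rate into the stated dependence on the rank.

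First I would bound the bias. Writing $K^{(l)}$ as the top-left $l\times l$ block of $K$, the same entrywise decomposition used in the proof of Theorem~\ref{th:smooth} gives $\|K-K^{(l)}\|_2^2\le 2\sum_{k\ge l+1}\|Ke_k\|_2^2$. Since the $\f_m$ are orthonormal, $\|Ke_k\|_2^2=\sum_{m=1}^r\lambda_m^2\langle\f_m,e_k\rangle^2$; interchanging the order of summation and using $\sum_{k\ge l+1}\langle\f_m,e_k\rangle^2\le (l+1)^{-2s}\|\f_m\|_{s,2}^2$ from Assumption~\ref{bias-cond}, I obtain
$$
\|K-K^{(l)}\|_2^2 \le 2c_*^2\, l^{-2s}\sum_{m=1}^r\lambda_m^2 \le 2c_*^2\lambda_{\max}^2\, r\, l^{-2s},
$$
where the last inequality uses $\lambda_m\le\|K\|_\infty\le\lambda_{\max}$. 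Equivalently this shows $\|K\|_{s,2}^2=\sum_m\lambda_m^2\|\f_m\|_{s,2}^2\le c_*^2\lambda_{\max}^2 r$, i.e. $\cK_{r}(s,c_*;\l_{\max})\subset \overline{\mathcal K}_r(s,\rho;\lambda_{\max})$ with $\rho=c_*\lambda_{\max}\sqrt r$, so the result is morally a corollary of Theorem~\ref{th:smooth}; I would nevertheless run the optimization directly because $l_1$ differs from the $\ell$ of Theorem~\ref{th:smooth} by constants depending on $\lambda_{\max},\sigma,c_*$.

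Combining the two displays yields, for every $l\ge1$,
$$
\E[\| \hat A^{(l)} - K \|_2^2 ] \le 2c_*^2\lambda_{\max}^2\, r\, l^{-2s} + C (\lambda_{\max}+\sigma^2)^2\frac{(r\wedge l)l}{n},
$$
and it remains to evaluate this at $l=l_1$ by a case split on the sign of $n^{1/(2s+1)}-r$. If $r\le n^{1/(2s+1)}$ the first term in the definition of $l_1$ dominates, $l_1\asymp n^{1/(2s+1)}\ge r$, whence $r\wedge l_1=r$ and both the bias and the variance term are $\lesssim r\,n^{-2s/(2s+1)}$. If $r> n^{1/(2s+1)}$ the second term dominates, and since $r\in\mathbb N$ one has $l_1=\lceil(rn)^{1/(2(s+1))}\rceil\le r$, so $r\wedge l_1=l_1\asymp (rn)^{1/(2(s+1))}$ and both terms are $\lesssim r^{1/(s+1)}n^{-s/(s+1)}$. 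A one-line comparison of the two candidate rates (their ratio is $r^{s/(s+1)}n^{-s/((s+1)(2s+1))}$) shows that the dominant term is in each regime precisely the smaller of the two, so the bound equals the claimed minimum, with $C$ depending only on $\lambda_{\max},\sigma,c_*$.

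I do not expect a genuine obstacle, as the argument runs parallel to Theorem~\ref{th:smooth}: the entrywise bias estimate and the two-regime optimization are routine. The only points deserving care are the passage from the global Sobolev bound to the per-eigenfunction bound, which is where the factor $\sum_m\lambda_m^2\le r\lambda_{\max}^2$ enters, and the verification that the ceiling in the definition of $l_1$ still yields $l_1\le r$ in the regime $r>n^{1/(2s+1)}$ (valid because $r$ is a positive integer), so that $r\wedge l_1$ selects the intended factor.
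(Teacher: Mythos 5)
Your proposal is correct and follows essentially the same route as the paper: the key step in both is the observation that $\|K\|_{s,2}^2=\sum_{k=1}^r\lambda_k^2\|\varphi_k\|_{s,2}^2\le c_*^2\lambda_{\max}^2 r$, i.e.\ $\cK_{r}(s,c_*;\l_{\max})\subset \overline{\mathcal K}_r(s,\rho;\lambda_{\max})$ with $\rho=c_*\lambda_{\max}\sqrt{r}$, after which the rate follows from the bias--variance trade-off of Theorem~\ref{th:smooth}. Your only addition is to rerun the optimization explicitly at $l=l_1$ rather than at the $\ell$ of Theorem~\ref{th:smooth}; this is a harmless (and slightly more careful) elaboration of what the paper leaves implicit, since the two choices differ only by factors absorbed into the constant $C$.
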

\begin{proof} It is enough to observe that, for all $K\in \cK_{r}(s,c_*;\l_{\max}),$
$$
\|K\|_{s,2}^2 = \sum_{k=1}^r \lambda_k^2 \|\varphi_k\|_{s,2}^2 \leq c_*^2 \lambda_{\max}^2 r,
$$
implying that 
$\cK_{r}(s,c_*;\l_{\max})\subset \overline{\mathcal K}_r(s,\rho;\lambda_{\max})$
with $\rho =c_{\ast}\lambda_{\max}\sqrt{r}.$ Bound \eqref{eq:th:nonpar} now follows 
from \eqref{eq:th:smooth}.
\end{proof}

When $r$ is a fixed constant and $n$ is large, the rate in \eqref{eq:th:nonpar} is $O(n^{-\frac{2s}{2s+1}})$. The next theorem shows that this rate cannot be achieved by the corrected empirical covariance estimator $ \bar A^{(l)}$ whatever is the choice of $l$.

\begin{theorem}\label{th:nonpar:empir_covar}
Let  $n\ge 1$, $1\le r<\infty$. There exists $c_*>0$ such that for any $s>0$,  $\l_{\max}>0$ we have 
\begin{align}\label{eq:th:nonpar:empir_covar}
\inf_{l\ge1} \sup_{K\in \cK_{r}(s,c_*;\l_{\max})} \E[\| \bar A^{(l)} - K \|_2^2 ] \geq C  n^{-\frac{s}{s+1}}
\end{align}
where $C>0$ is a constant that can depend only on $\lambda_{\max}, \sigma$, $s$ and $c_*$.
\end{theorem}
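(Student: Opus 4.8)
The plan is to prove the lower bound by constructing a single hard kernel $K$---depending on $n$ and $s$ but crucially not on $l$---that keeps the risk of $\bar A^{(l)}$ of order $n^{-s/(s+1)}$ for every choice of $l$. The natural starting point is the bias--variance lower bound \eqref{lower_empir0} obtained from Proposition~\ref{prop:empir_covar},
\begin{align*}
\E[\| \bar A^{(l)} - K \|_2^2 ] \ge \|K^{(l)} - K\|_2^2 + \frac{\sigma^4 l^2}{n}, \qquad l\ge1.
\end{align*}
Since $\sup_{K\in\cK_r(s,c_*;\l_{\max})}\E[\|\bar A^{(l)}-K\|_2^2]$ dominates the right-hand side evaluated at any fixed admissible $K$, and such $K$ can be chosen once and for all independently of $l$, it suffices to produce one $K\in\cK_r(s,c_*;\l_{\max})$ with $\inf_{l\ge1}\big(\|K^{(l)}-K\|_2^2 + \sigma^4 l^2/n\big)\ge C n^{-s/(s+1)}$.

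First I would take $K$ of rank one, $K=\l\,(\f\otimes\f)$ with $\l=\l_{\max}/2$, whose single eigenfunction splits its mass between the lowest frequency and one critical high frequency $m$,
\begin{align*}
\f=\sqrt{1-\e^2}\,e_1+\e\,e_m,\qquad \e^2=m^{-2s}.
\end{align*}
The calibration $\e^2=m^{-2s}$ is the key device: it makes the high-frequency spike cost exactly one unit of Sobolev budget, since $\|\f\|_{s,2}^2=(1-\e^2)+\e^2 m^{2s}=2-\e^2\le2$. Thus $K$ satisfies Assumption~\ref{bias-cond} with the \emph{absolute} constant $c_*=\sqrt2$, independent of $s$; moreover $\|K\|_\infty=\l<\l_{\max}$ and the rank equals $1\le r$, so $K\in\cK_r(s,c_*;\l_{\max})$.

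For this $K$ the two terms are explicit. Whenever $1\le l<m$ the projection $K^{(l)}$ discards the whole frequency-$m$ component, and a direct computation gives $\|K^{(l)}-K\|_2^2=\l^2\e^2(2-\e^2)\ge\l^2 m^{-2s}$; whenever $l\ge m$ the variance floor already gives $\sigma^4 l^2/n\ge\sigma^4 m^2/n$. Consequently, for every $l\ge1$,
\begin{align*}
\E[\| \bar A^{(l)} - K \|_2^2 ]\ge\min\Big(\l^2 m^{-2s},\ \frac{\sigma^4 m^2}{n}\Big).
\end{align*}
It then remains to pick the integer $m$ balancing the two sides: taking $m$ of order $(\l^2 n/\sigma^4)^{1/(2s+2)}\asymp n^{1/(2(s+1))}$ makes both $\l^2 m^{-2s}$ and $\sigma^4 m^2/n$ of order $n^{-s/(s+1)}$, with a multiplicative constant depending only on $\l_{\max},\sigma,s$, which yields \eqref{eq:th:nonpar:empir_covar}.

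The conceptual heart of the argument---and the step I expect to require the most care---is arranging for a single $K$ to defeat all $l$ at once: a sliver of spectral mass of size $n^{-s/(s+1)}$ placed at the critical frequency $m\sim n^{1/(2(s+1))}$ penalizes small $l$ through the bias it creates and large $l$ through the intrinsic variance $\sigma^4 l^2/n$ of the empirical covariance. The remaining difficulties are merely bookkeeping: one rounds $m$ to an integer and ensures $m\ge2$ (so that $e_1\ne e_m$), absorbing the rounding into $C$, and handles the finitely many small $n$ for which the balancing value of $m$ drops below $2$ by simply fixing $m=2$, where $\|\f\|_{s,2}^2=2-2^{-2s}\le2$ keeps $c_*=\sqrt2$ valid and a further adjustment of $C$ closes the bound.
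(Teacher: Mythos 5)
Your proof is correct and follows essentially the same route as the paper's: both rest on the exact risk formula of Proposition~\ref{prop:empir_covar} (via \eqref{lower_empir0}) together with a rank-one hard kernel whose eigenfunction carries a calibrated amount of high-frequency mass, balanced at $l\asymp n^{1/(2s+2)}$. The only real difference is that the paper spreads the high-frequency mass over frequencies $l+1,\dots,2l$ and lets the hard kernel depend on $l$, whereas your single spike at $m\asymp n^{1/(2(s+1))}$ produces one kernel defeating every $l$ simultaneously (and an absolute $c_*=\sqrt2$), which is a slightly stronger conclusion than the stated $\inf_{l}\sup_{K}$ bound.
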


\begin{proof}
Fix $l\geq 1$ and consider the function
$$
\f_1(t) = C_1\left(\sum_{k=1}^l \frac{e_k(t)}{k^{s+1}} + \sum_{k=l+1}^{2l} \frac{e_k(t)}{k^{s+1/2}}\right), \quad t\in[0,1],
$$
where $C_1$ is a normalizing constant, depending only on $s,$ such that $\|\f_1\|_2=1$. 
By an easy computation, $\|\f_1\|_{s,2}\leq c'$ for a constant $c'$ depending only on $s.$ 

Set $\bar K(t,u)=\l_{\max}\f_1(t)\f_1(u)$. Then $\bar K\in \cK_{r}(s,c_*;\l_{\max})$ with $c_*=c'$.
Due to \eqref{lower_empir0},
\begin{align}\nonumber
\sup_{K\in \cK_{r}(s,c_*;\l_{\max})}  \E[\| \bar A^{(l)} - K \|_2^2 ]  &\ge \sup_{K\in \cK_{r}(s,c_*;\l_{\max})}  \|K^{(l)} - K\|_2^2 + \frac{ \sigma^4 l^2}{n}\\
&\ge \|\bar K^{(l)} - \bar K\|_2^2 + \frac{ \sigma^4 l^2}{n}.\label{eq:th:nonpar:empir_covar1}
\end{align}
Observe that 
$$
\varphi_1\otimes \varphi_1=\varphi_1^{(l)}\otimes \varphi_1^{(l)}+ (\varphi_1-\varphi_1^{(l)})\otimes \varphi_1^{(l)}+
\varphi_1\otimes (\varphi_1-\varphi_j^{(l)}).
$$
Therefore, 
\begin{align*}
& \| \f_1\otimes \f_1 - \varphi_1^{(l)}\otimes \varphi_1^{(l)} \|_2^2 
= \|(\varphi_1-\varphi_1^{(l)})\otimes \varphi_1^{(l)}\|_2^2+ \|\varphi_1\otimes (\varphi_1-\varphi_1^{(l)})\|_2^2
\\
&\qquad \qquad \geq \|\varphi_1\|_2^2\|\varphi_1-\varphi_1^{(l)}\|_2^2
=\|\varphi_1 - \varphi_1^{(l)}\|_2^2.
\end{align*}
This implies that 
$$
\|\bar K^{(l)}-\bar K \|_{2}^2\ge \l_{\max}^2\|\varphi_{1} - \varphi_{1}^{(l)}\|_2^2\ge  c\l_{\max}^2  l^{-2s}
$$
for some constant $c>0$ depending only on $s.$

Using this inequality in \eqref{eq:th:nonpar:empir_covar1} and taking the minimum over $l\ge 1,$ we obtain the result.
\end{proof}

%

\section{Adaptive Estimation}\label{adap}

We observe that the optimal choice of the parameter $l$ in theorems \ref{th:smooth} and \ref{th:nonpar} depends on the unknown parameters 
$\rho,$ $s$ and $r$ that quantify respectively the smoothness of the eigenfunctions of $K$ and their number.  In this section, we propose an adaptive estimator, which does not depend on $s$ and $r$ that attains the same rate as in Theorem \ref{th:smooth} or in Theorem \ref{th:nonpar}.  

First, we describe a general method of aggregating estimators.
Assume without loss of generality that the sample size $n$ is even. We split the sample of $n$ trajectories $\mathbb X =\{X_1,\ldots, X_n\}$ into two parts of equal size $n/2$, denoted $\mathbb X_1 = \{X_1,\ldots, X_{n/2}\}$ and $\mathbb X_2= \{X_{n/2+1},\ldots, X_n\}$. Fix an integer $L$.  
Using the sample $\mathbb X_1$, we construct a family of estimators $A^{(1)},\ldots, A^{(L)}$ such that $A^{(l)}\in \mathcal S_l$, $1\leq l \leq L$. These can be, for example, the estimators $\hat A^{(1)},\ldots, \hat A^{(L)}$ defined in (\ref{Lasso}).

Consider the following adaptive selector of $l$:
\begin{align}\label{hatl}
\hat l = \argmin_{1\leq l \leq L}\{ \| A^{(l)} \|_2^2 - 2 \langle A^{(l)}, \tilde R_n^{(l)}-\s^2I^{(l)}\rangle  \},
\end{align}
where $\tilde R_n^{(l)}(t,u)= \frac{2}{n}\sum_{i=n/2+1}^n \dot{X}_i^{(l)}(t) \dot{X}_i^{(l)}(u)$ is the projected empirical covariance function associated to the second subsample $\mathbb X_{2}$. 

In the following theorem we assume that the first subsample is frozen, so we state the result for non-random functions $A^{(l)}\in \mathcal S_l$, $1\leq l \leq L$.

\begin{theorem} \label{adap-scheme}
Let $A^{(l)}$, $1\leq l \leq L$, be functions such that $A^{(l)}\in \mathcal S_l$.
 For any $t>0$, with probability at least $1-e^{-t}$ with respect to the subsample $\mathbb X_{2}$  we have
\begin{align*}
\| A^{(\hat l)} - K \|_2^2 &\leq 2 \min_{1\leq l \leq L}\| A^{(l)} - K  \|_2^2 
 +
C[\lambda_{\max}\vee \sigma^2]^2 \max\left\lbrace \frac{t+\log L}{n},\left(\frac{t+\log L}{n}\right)^2 \right\rbrace
\end{align*}
for all $K$ satisfying \eqref{K} with $\sum_{k= 1}^r \lambda_k<\infty$.
Here, $C>0$ is an absolute constant.
\end{theorem}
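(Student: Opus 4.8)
The plan is to read the selector \eqref{hatl} as a hold-out procedure and to reduce the \emph{entire} stochastic fluctuation to a single centered random matrix by embedding every $A^{(l)}$ into the common space $\mathcal S_L$. Fix the oracle index $l^\ast\in\argmin_{1\le l\le L}\|A^{(l)}-K\|_2^2$. Viewing each $A^{(l)}$ as a symmetric $L\times L$ matrix supported on its first $l$ coordinates, one has $\langle A^{(l)},\tilde R_n^{(l)}-\sigma^2 I^{(l)}\rangle=\langle A^{(l)},\tilde B_L-\sigma^2 I^{(L)}\rangle$, where $\tilde B_L=\frac2n\sum_{i=n/2+1}^n \mathrm x_i(L)\mathrm x_i(L)^\top$ is the empirical covariance of the second subsample computed with the first $L$ coordinates, so that $\E\tilde B_L=B_L=K^{(L)}+\sigma^2 I^{(L)}$. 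Set $\Xi:=\tilde B_L-B_L$, one centered random matrix common to all $l$. Using $\langle A^{(l)},K^{(L)}\rangle=\langle A^{(l)},K\rangle$ and $\|A^{(l)}\|_2^2-2\langle A^{(l)},K\rangle=\|A^{(l)}-K\|_2^2-\|K\|_2^2$, the defining inequality $\|A^{(\hat l)}\|_2^2-2\langle A^{(\hat l)},\tilde B_L-\sigma^2 I^{(L)}\rangle\le\|A^{(l^\ast)}\|_2^2-2\langle A^{(l^\ast)},\tilde B_L-\sigma^2 I^{(L)}\rangle$ collapses, after the deterministic $K^{(L)}$ part is absorbed into the losses, to the clean bound
\begin{align*}
\|A^{(\hat l)}-K\|_2^2-\|A^{(l^\ast)}-K\|_2^2\le 2\langle A^{(\hat l)}-A^{(l^\ast)},\Xi\rangle .
\end{align*}
This is the decisive step: had one kept the two quadratic forms separate, a term $\langle K^{(l)},\Xi^{(l)}\rangle$ with variance of order $(\lambda_{\max}+\sigma^2)^2\|K\|_F^2/n$ would survive and could \emph{not} be controlled by the claimed remainder; differencing $A^{(\hat l)}-A^{(l^\ast)}$ removes it entirely.

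Next I would record the geometric inequality controlling the right-hand side. Because $\mathcal S_l\subset\mathcal S_L$ and $K^{(l)},K^{(L)}$ are orthogonal projections of $K$, two Pythagorean identities give $\|A^{(l)}-K^{(L)}\|_F^2=\|A^{(l)}-K^{(l)}\|_2^2+\|K^{(l)}-K^{(L)}\|_2^2$ and $\|K^{(l)}-K^{(L)}\|_2\le\|K^{(l)}-K\|_2$, whence $\|A^{(l)}-K^{(L)}\|_F\le\|A^{(l)}-K\|_2$ and therefore
\begin{align*}
\|A^{(\hat l)}-A^{(l^\ast)}\|_F\le\|A^{(\hat l)}-K\|_2+\|A^{(l^\ast)}-K\|_2 .
\end{align*}

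For the stochastic term, for each fixed $l$ the quantity $\langle A^{(l)}-A^{(l^\ast)},\Xi\rangle=\frac2n\sum_{i}\big(Z_i^\top M Z_i-\E\,Z_i^\top M Z_i\big)$ with $M=B_L^{1/2}(A^{(l)}-A^{(l^\ast)})B_L^{1/2}$ and $Z_i$ standard Gaussian is a centered average of i.i.d.\ sub-exponential quadratic forms. A Bernstein/Hanson--Wright bound, together with $\|M\|_F\le(\lambda_{\max}+\sigma^2)\|A^{(l)}-A^{(l^\ast)}\|_F$ and $\|M\|_\infty\le(\lambda_{\max}+\sigma^2)\|A^{(l)}-A^{(l^\ast)}\|_F$, followed by a union bound over $l=1,\dots,L$, yields with probability at least $1-e^{-t}$ the uniform estimate
\begin{align*}
\big|\langle A^{(l)}-A^{(l^\ast)},\Xi\rangle\big|\le C(\lambda_{\max}+\sigma^2)\,\|A^{(l)}-A^{(l^\ast)}\|_F\Big(\sqrt{\tfrac{t+\log L}{n}}+\tfrac{t+\log L}{n}\Big).
\end{align*}

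Finally I would combine these. Substituting the Frobenius bound and applying $2ab\le\varepsilon a^2+b^2/\varepsilon$ separately to the factors $\|A^{(\hat l)}-K\|_2$ and $\|A^{(l^\ast)}-K\|_2$, each with $\varepsilon=1/3$, moves $\tfrac13\|A^{(\hat l)}-K\|_2^2+\tfrac13\|A^{(l^\ast)}-K\|_2^2$ to the left and produces the leading constant $(1+\tfrac13)/(1-\tfrac13)=2$ in front of $\|A^{(l^\ast)}-K\|_2^2=\min_{1\le l\le L}\|A^{(l)}-K\|_2^2$. The collected remainder is a constant multiple of $(\lambda_{\max}+\sigma^2)^2\big(\sqrt{(t+\log L)/n}+(t+\log L)/n\big)^2\le C[\lambda_{\max}\vee\sigma^2]^2\max\{(t+\log L)/n,((t+\log L)/n)^2\}$, which is exactly the asserted bound. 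The hard part is precisely the reduction in the first paragraph—forcing the stochastic term to depend only on the difference $A^{(\hat l)}-A^{(l^\ast)}$, backed by the projection inequality $\|A^{(l)}-K^{(L)}\|_F\le\|A^{(l)}-K\|_2$; once these are in place the concentration step and the arithmetic are routine.
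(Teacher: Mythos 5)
Your proof is correct and follows essentially the same route as the paper's: the selector inequality reduces everything to the single fluctuation term $2\langle A^{(\hat l)}-A^{(l^\ast)},\Xi\rangle$, which is controlled by normalizing, applying Hanson--Wright to the Gaussian quadratic form $Z_i^\top B^{1/2}U B^{1/2}Z_i$, taking a union bound over $l=1,\dots,L$, and absorbing via Young's inequality to get the constant $2$. Your embedding of all $A^{(l)}$ into $\mathcal S_L$ with one centered matrix $\Xi=\tilde B_L-B_L$ is only a cosmetic (and arguably cleaner) variant of the paper's use of the nested spaces via $\tilde R_n^{(\hat l\vee\bar l)}$.
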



\begin{proof} Fix an arbitrary $\bar l \in \{1,\dots,L\}$.
Note that, by definition, $\{\mathcal S_l\}_{l\geq 1}$ is a nested sequence satisfying
$$
\mathcal S_{l+1} = \mathcal S_l \oplus \mathrm{l.s.}\left\lbrace   e_{j}\otimes e_{l+1} + e_{l+1} \otimes e_{j},\, 1\leq j \leq l \right\rbrace.
$$
Consequently, for any $1\leq l,l'\leq L$, we have
$\langle A^{(l)}, \tilde R_n^{(l)} \rangle = \langle A^{(l)}, \tilde R_n^{(l\vee l')} \rangle$. Similarly $\langle A^{(l)}, K  \rangle = \langle A^{(l)}, K^{(l)}  \rangle = \langle A^{(l)}, K^{(l\vee l')}  \rangle$.
Combining this observation with (\ref{hatl}), we get
\begin{align*}
&\|A^{(\hat l)} - K\|_2^2 - \|A^{(\bar l)} - K\|_2^2 \\
&\hspace{1.5cm}= \| A^{(\hat l)}\|_2^2 - 2\langle A^{(\hat l)},K^{(\hat l)} \rangle - [\|A^{(\bar l)}\|_2^2 - 2\langle A^{(\bar l)}, K^{(\bar l)}\rangle]\\
&\hspace{1.5cm}\leq \|\hat A^{(\hat l)}\|_2^2 - 2\langle A^{(\hat l)},\tilde R_n^{(\hat l)} - \sigma^2 I^{(\hat l)} \rangle - [\|A^{(\bar l)}\|_2^2 - 2\langle A^{(\bar l)}, \tilde R_n^{(\bar l)} - \sigma^2 I^{(\bar l)}\rangle]\\
&\hspace{6cm} + 2\langle A^{(\hat l) } - A^{(\bar l)}, \tilde R_n^{(\hat l\vee \bar l )} - K^{(\hat l\vee \bar l )} -\sigma^2 I^{(\hat l \vee \bar l)}\rangle\\
&\hspace{1.5cm}\leq 2\langle A^{(\hat l )} - A^{(\bar l)}, \tilde R_n^{(\hat l\vee \bar l )} - K^{(\hat l\vee \bar l )} - \sigma^2 I^{(\hat l \vee \bar l)}\rangle.
\end{align*}
Here, $
 K^{(\hat l\vee \bar l )} +\sigma^2 I^{(\hat l \vee \bar l)}= \E[ \tilde R_n^{(\hat l\vee \bar l )}] 
$. Setting for brevity $m=\hat l\vee \bar l$ we deduce from the previous display that
\begin{align*}
\|A^{(\hat l)} - K\|_2^2 - \|A^{(\bar l)} - K\|_2^2 &\le 2 U \|A^{(\hat l )} - A^{(\bar l)}\|_2 
\le \frac16  \|A^{(\hat l )} - A^{(\bar l)}\|_2^2 +6U^2
\end{align*}
where $U \triangleq \max_{l=1,\dots,L}
\langle  U_l, \tilde R_n^{(m)} - \E[ \tilde R_n^{(m)}]\rangle$ with $U_l= (A^{(\hat l )} - A^{(\bar l)})/\|A^{(\hat l )} - A^{(\bar l)}\|_2$ if $A^{(\hat l )} \ne A^{(\bar l)}$ and $U_l=0$ otherwise. It follows from the last display and the bound 
$$
\frac16  \|A^{(\hat l )} - A^{(\bar l)}\|_2^2 \leq \frac{1}{3}\|A^{(\hat l)}-K\|_2^2 + \frac{1}{3}\|A^{(\bar l)}-K\|_2^2
$$
that 
\begin{align}\label{U2}
\|A^{(\hat l)} - K\|_2^2 \le  2 \|A^{(\bar l)} - K\|_2^2  +9 U^2.
\end{align}
Since $\bar l$ is arbitrary, to complete the proof it suffices to bound the random variable $U$ in probability. We first obtain a bound for each of the variables $\zeta_l= \langle  U_l, \tilde R_n^{(m)} - \E[ \tilde R_n^{(m)}]\rangle$. Note that associating $U_l$ with the corresponding  $m\times m$ matrices that we will also denote by $U_l$, we can write $\zeta_l =\langle  U_l, \hat B - B \rangle$ where $\hat B = (2/n) \sum_{i=n/2+1}^n {\rm x}_i (m){\rm x}_i(m)^\top$,  $B=K^{(m)} + \sigma^2  I^{(m)} = \E[{\rm x}_i(m){\rm x}_i(m)^\top]$, and 
${\rm x}_i(m)$ are i.i.d. normal vectors with mean 0 and covariance matrix  $B$ (cf.  the proof of Lemma~\ref{lem-sparse}) and $\langle  \cdot, \cdot \rangle$ is the inner product of matrices. It follows that
\begin{align*}
\zeta_l&= \langle B^{1/2} U_l B^{1/2}, \,\frac{2}{n}\sum_{i=n/2+1}^n Z_i Z_i^\top - I^{(m)}\rangle\\
&= \tr\Big( \frac{2}{n}\sum_{i=n/2+1}^n  B^{1/2} U_l B^{1/2}Z_i Z_i^\top - B^{1/2} U_l B^{1/2}\Big)\\
&= \frac{2}{n}\sum_{i=n/2+1}^n  Z_i^\top DZ_i - \tr(D)
\end{align*}
where $Z_1,\ldots,Z_n$ are i.i.d. standard normal vectors in $\R^m$ and $D=B^{1/2} U_l B^{1/2}$. By the  Hanson-Wright inequality (see, e.g., \cite{RudelsonVershynin}) we have that for any $t>0$, with probability at least $1-e^{-t}$, 
\begin{align}\label{HW}
\left|\frac{2}{n}\sum_{i=n/2+1}^n  Z_i^\top DZ_i - \tr(D)\right| \le C\left(\frac{\|D\|_\infty t}{n} +  \|D\|_F\sqrt{\frac{t}{n}}\right)
\end{align}
where $C>0$ is an absolute constant. Since $\|U_l\|_2\le1$ when considering $U_l$ as a function (which is equivalent to $\|U_l\|_F\le1$ when considering $U_l$ as a matrix) and $\|B\|_\infty\le \l_{\max} +\s^2$ we have $\|D\|_\infty\le \|D\|_F\le \l_{\max} +\s^2$. Thus, with probability at least $1-e^{-t}$
\begin{align*}
|\zeta_l| \le C  (\lambda_{\max} \vee \sigma^2) \left( \sqrt{\frac{t}{n}} + \frac{t}{n}\right)
\end{align*}
where $C>0$ is an absolute constant. 
The union bound argument gives that, with probability at least $1-e^{-t}$, 
$$
U^2= \max_{l=1,\dots,L} \zeta_l^2 \le C (\lambda_{\max} \vee \sigma^2)^2 \left( \sqrt{\frac{t+ \log L}{n}} + \frac{t+ \log L}{n}\right)^2
$$
where $C>0$ is an absolute constant. Combining this with \eqref{U2} proves the theorem.
\end{proof}


We now apply Theorem \ref{adap-scheme} to $A^{(l)}=\hat A^{(l)}$ where the estimators $\hat A^{(1)},\ldots, \hat A^{(L)}$ are defined in (\ref{Lasso}).
Combining Theorems \ref{cor-main}, \ref{adap-scheme} and the fact that, for a random variable $\eta$, $\E[|\eta|] = \int_{0}^{\infty} \mathbb P \left(|\eta|\geq t \right)dt$ we get the following result.

\begin{theorem}\label{th:model_sel}
Let each of the estimators $\hat A^{(l)}$ satisfy the conditions of Theorem \ref{cor-main}. Then 
$$
\E\left[\|\hat A^{(\hat l)} - K\|_2^2\right] \le C\min_{1\leq l \leq L} \inf_{A \in \mathcal S_l,\, A\geq 0} \left\lbrace \|A-K\|_2^2  +  v_n(A,l,l) \right\rbrace  +C[\lambda_{\max}\vee \sigma^2]^2 \frac{\log L}{n}
$$
for all $K$ satisfying \eqref{K} with $\sum_{k= 1}^r \lambda_k<\infty$.
Here, 
$C>0$ is an absolute constant. 
\end{theorem}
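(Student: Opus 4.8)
The plan is to exploit the independence of the two subsamples and to convert the two high-probability statements of Theorems~\ref{cor-main} and \ref{adap-scheme} into bounds in expectation by integrating their tails. Conditionally on the first subsample $\mathbb X_1$, the functions $\hat A^{(1)},\dots,\hat A^{(L)}$ are deterministic elements of $\mathcal S_1,\dots,\mathcal S_L$, whereas $\tilde R_n^{(l)}$, the selector $\hat l$, and the probabilistic content of Theorem~\ref{adap-scheme} depend only on the fresh i.i.d. sample $\mathbb X_2$. I would therefore apply Theorem~\ref{adap-scheme} with $A^{(l)}=\hat A^{(l)}$ conditionally on $\mathbb X_1$, obtaining that, with conditional probability at least $1-e^{-t}$,
$$
\|\hat A^{(\hat l)}-K\|_2^2 \le 2\min_{1\le l\le L}\|\hat A^{(l)}-K\|_2^2 + C[\lambda_{\max}\vee\sigma^2]^2\max\Big\{\tfrac{t+\log L}{n},\big(\tfrac{t+\log L}{n}\big)^2\Big\}.
$$

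First I would integrate this tail over $t$. Writing the left-hand side minus its ($\mathbb X_1$-measurable) first term as $Y$, so that $\PP(Y>g(t)\mid\mathbb X_1)\le e^{-t}$ with $g(t)=C[\lambda_{\max}\vee\sigma^2]^2\max\{(t+\log L)/n,((t+\log L)/n)^2\}$, the identity $\E[|\eta|]=\int_0^\infty\PP(|\eta|\ge s)\,ds$ applied to $Y_+$ together with the change of variables $s=g(t)$ gives $\E[Y_+\mid\mathbb X_1]\le g(0)+\int_0^\infty e^{-t}g'(t)\,dt\le C[\lambda_{\max}\vee\sigma^2]^2(\log L)/n$, where the last bound uses $1\le\log L\le n$ (the regime $\log L>n$ makes the remainder trivially dominant, and $\log L<1$ is handled directly). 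Hence
$$
\E\big[\|\hat A^{(\hat l)}-K\|_2^2\,\big|\,\mathbb X_1\big]\le 2\min_{1\le l\le L}\|\hat A^{(l)}-K\|_2^2 + C[\lambda_{\max}\vee\sigma^2]^2\,\frac{\log L}{n}.
$$
Taking the expectation over $\mathbb X_1$ and using $\E[\min_l\|\hat A^{(l)}-K\|_2^2]\le\min_l\E[\|\hat A^{(l)}-K\|_2^2]$ reduces the problem to bounding each $\E[\|\hat A^{(l)}-K\|_2^2]$ individually.

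For that final step I would re-run the same integration idea on Theorem~\ref{cor-main}. Fixing an arbitrary $A\in\mathcal S_l$ with $A\ge0$, that theorem yields $\PP\big(\|\hat A^{(l)}-K\|_2^2-\|A-K\|_2^2> Cv_n(A,l,t)\big)\le e^{-t}$. Since $v_n(A,l,t)$ is the minimum of the two increasing functions $[\lambda_{\max}+\sigma^2]\tr(A)\delta_n(l,t)$ and $[\lambda_{\max}+\sigma^2]^2\,\mathrm{rank}(A)\,\delta_n^2(l,t)$, each of these two terms separately dominates $v_n$ and hence obeys the same tail bound; integrating each against $e^{-t}$ and invoking $\int_0^\infty e^{-t}\delta_n(l,t)\,dt\le C\delta_n(l,l)$ and $\int_0^\infty e^{-t}\delta_n^2(l,t)\,dt\le C\delta_n^2(l,l)$ (obtained by splitting each $\max$ into its two regimes and using $\int_0^\infty(l+t)^{p}e^{-t}\,dt\le C\,l^{p}$ for $p\in\{1/2,1,2\}$ and $l\ge1$), then taking the minimum of the two resulting bounds, gives $\E[\|\hat A^{(l)}-K\|_2^2]\le\|A-K\|_2^2+Cv_n(A,l,l)$. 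As the left-hand side is free of $A$, I take the infimum over $A$ and then the minimum over $l$, which yields the claimed inequality; the fact that the $\hat A^{(l)}$ are built on $\mathbb X_1$ (of size $n/2$) only changes $\delta_{n/2}$ versus $\delta_n$ by a bounded factor, absorbed into $C$.

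The main obstacle is computational rather than conceptual: verifying that integrating the $t$-dependent right-hand sides against the exponential tail reproduces the same quantities evaluated at the effective scales $t=l$ and $t=\log L$, up to absolute constants. The one genuinely delicate point I would flag is the interaction between the minimum defining $v_n$ and the expectation, which I resolve by bounding the expected excess risk by each of the two terms of $v_n$ before taking their minimum, rather than attempting to integrate the minimum directly.
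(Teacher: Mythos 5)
Your proposal is correct and follows essentially the same route as the paper, whose entire proof of Theorem~\ref{th:model_sel} is the one-sentence remark that it follows by combining Theorems~\ref{cor-main} and~\ref{adap-scheme} with the tail-integration identity $\E[|\eta|]=\int_0^\infty \PP(|\eta|\ge s)\,ds$; you simply supply the details the paper leaves implicit (conditioning on $\mathbb X_1$ so that Theorem~\ref{adap-scheme} applies to the frozen $\hat A^{(l)}$, interchanging $\E$ and $\min_l$, integrating each branch of the minimum defining $v_n$ separately before taking the minimum, and absorbing the $n/2$ versus $n$ discrepancy into the constant). The one caveat, which applies equally to the paper's own derivation of \eqref{borne lasso r-fini}, is that integrating the tail of Theorem~\ref{cor-main} over $t$ treats $\hat A^{(l)}$ as a single fixed estimator while its penalty $\mu$ is tuned with a fixed value of $t$, a standard abuse that neither you nor the authors address explicitly.
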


We now fix $L=n$. Using Theorem~\ref{th:model_sel}, Theorem~\ref{th:smooth}, Theorem~\ref{th:nonpar} and Corollary~\ref{cor:finite}, we obtain the following result.

\begin{theorem}\label{th:adapt}
Let each of the estimators $A^{(l)}=\hat A^{(l)}$ satisfy the conditions of Theorem \ref{cor-main}. Let $\hat A^{(\hat l)}$ be the aggregated estimator with $\hat l$ defined in (\ref{hatl}) with $L=n$. 

(i) For any $r\geq 1$, $c_*>0$ and $s>0$ such that $1\le r\leq n^{1+2s}$, we have
\begin{align*}
\sup_{K\in \cK_{r}(s,c_*;\l_{\max})} \E\|\hat A^{(\hat l)} - K\|_2^2 \leq C \min\big(rn^{-\frac{2s}{2s+1}}, \, r^{\frac{1}{s+1}}n^{-\frac{s}{s+1}}\big),
\end{align*}
where $C>0$ is a constant that can depend only on $\lambda_{\max},\sigma^2, c_*$, and $s$. 

(ii) For any $r\geq 1$, $\rho\geq 1$, $s>0$, $\l_{\max}>0$, $\sigma^2\geq 0$ such that $\rho^2 \leq (\l_{\max} + \sigma^2)^2  \min \big( r n^{2s} , n^{1+2s} \big)$,
we have
\begin{align*}
\sup_{K\in \overline{\cK}_{r}(s,\rho;\l_{\max})} \E\|\hat A^{(\hat l)} - K\|_2^2 \leq C \min\left( \left(\frac{r}{n}\right)^{2s/(2s+1)}, \,  n^{-s/(s+1)}\right),
\end{align*}
where $C>0$ is a constant that can depend only on $\lambda_{\max},\sigma^2,\rho$, and $s$. 

(iii) If $(r\wedge l)l\ge \log n$ and $l\le n$, then for any $\l_{\max}>0$,
\begin{align*}\label{}
\sup_{K\in{\mathcal K}_{r,l}(\l_{\max}) }\E[\| \hat A^{(\hat l)} - K \|_2^2 ] \leq C \frac{(r\wedge l)l}{n}
\end{align*}
where $C>0$ is a constant that can depend only on $\lambda_{\max}$ and $\sigma^2$. 
\end{theorem}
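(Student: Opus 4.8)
The plan is to obtain all three bounds as direct consequences of the master oracle inequality of Theorem~\ref{th:model_sel}, which, since we have fixed $L=n$, reads
\begin{align*}
\E\left[\|\hat A^{(\hat l)} - K\|_2^2\right] \le C\min_{1\leq l \leq n} \inf_{A \in \mathcal S_l,\, A\geq 0} \left\lbrace \|A-K\|_2^2  +  v_n(A,l,l) \right\rbrace  +C[\lambda_{\max}\vee \sigma^2]^2 \frac{\log n}{n}.
\end{align*}
In each case I would bound the inner double minimum by evaluating it at the oracle index used in the corresponding non-adaptive theorem and taking $A=K^{(l)}$ in the infimum. Since $\mathrm{rank}(K^{(l)})\le r\wedge l$ and a short computation gives $\delta_n^2(l,l)\le 4 l/n$ whenever $l\le n$, one has $v_n(K^{(l)},l,l)\le 4(\lambda_{\max}+\sigma^2)^2 (r\wedge l)l/n$, so the inner expression reproduces precisely the right-hand side of the bound (\ref{borne lasso r-fini}). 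Minimizing over $l$ therefore recovers the same bias--variance trade-off that was already optimized in Theorems~\ref{th:smooth} and~\ref{th:nonpar}.

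Concretely, for part (iii) I take $A=K$, which is legitimate because $K\in\mathcal S_l$ so the bias vanishes and $\mathrm{rank}(K)\le r\wedge l$; the hypothesis $l\le n$ guarantees that this index belongs to the admissible range $\{1,\dots,n\}$, so the leading term is at most $C(\lambda_{\max}+\sigma^2)^2(r\wedge l)l/n$. For part (i) I take $l=l_1$ together with the bias estimate established in the proof of Theorem~\ref{th:nonpar}, and for part (ii) I take $l=\ell$ together with $\|K-K^{(l)}\|_2^2\le 2\rho^2 l^{-2s}$ from the proof of Theorem~\ref{th:smooth}. The one point requiring genuine verification is that these oracle indices do not exceed $L=n$: the constraint $r\le n^{1+2s}$ in (i) is exactly what yields $l_1\le n$, while the constraint $\rho^2\le(\lambda_{\max}+\sigma^2)^2\min(rn^{2s},n^{1+2s})$ in (ii) is exactly what forces both terms defining $\ell$ to be at most $n$.

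It then remains to absorb the aggregation penalty $C[\lambda_{\max}\vee\sigma^2]^2(\log n)/n$ into the leading rate. For (iii) this is immediate from the assumption $(r\wedge l)l\ge \log n$, which gives $(\log n)/n\le (r\wedge l)l/n$. For (i) and (ii) I would observe that, because $r\ge 1$, both competing terms of the minimax rate are, up to a factor depending on $s$ (and on $\lambda_{\max},\sigma^2,\rho$ or $c_*$), bounded below by $n^{-2s/(2s+1)}=n^{-1}n^{1/(2s+1)}$; since $n^{1/(2s+1)}/\log n\to\infty$ for each fixed $s>0$, the term $(\log n)/n$ is dominated after enlarging the constant $C$ so as to depend on $s$.

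The only real (and mild) obstacle is this final bookkeeping: verifying that the oracle index stays at most $n$ under the stated parameter constraints, and confirming that the logarithmic aggregation penalty is negligible compared with the target rate. Both reduce to the elementary inequalities noted above and introduce no ingredient beyond Theorems~\ref{th:model_sel}, \ref{th:smooth} and~\ref{th:nonpar}.
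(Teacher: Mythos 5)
Your argument is correct and is exactly the route the paper intends: Theorem~\ref{th:adapt} is stated in the paper as a direct consequence of Theorem~\ref{th:model_sel} combined with the bias--variance computations of Theorems~\ref{th:smooth}, \ref{th:nonpar} and Corollary~\ref{cor:finite}, and your proposal simply makes explicit the two bookkeeping points the paper leaves implicit (that the parameter constraints ensure the oracle index is at most $L=n$, and that the $(\log n)/n$ aggregation penalty is absorbed into the rate). No further comment is needed.
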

The conditions $r\leq n^{1+2s}$ and $\rho^2 \leq (\l_{\max} + \sigma^2)^2  \min \big( r n^{2s} , n^{1+2s} \big)$ are rather mild. Indeed, if $r$ and $\rho$ are fixed quantities, then these conditions are satisfied for $n$ large enough. Theorem~\ref{th:adapt} shows that the estimator $\hat A^{(\hat l)}$ is adaptive to the unknown parameters $r$ and $s$ on the scale of classes $\overline{\cK}_{r}(s,\rho;\l_{\max})$ and $ \cK_{r}(s,c_*;\l_{\max})$ that no price is paid in the rate as compared to the non-adaptive estimators of Theorems~\ref{th:smooth} and \ref{th:nonpar}. The same estimator is adaptive on the scale of classes ${\mathcal K}_{r,l}(\l_{\max})$, again with no price to be paid, for a wide range of values of $l$ and $r$.

\section{Estimation of $\sigma^2$}\label{noise-est}
We now tackle the estimation of the unknown variance $\sigma^2$. We use the simple idea that $\langle e_l, S\rangle$ becomes negligible for large $l$ when Assumption \ref{bias-cond} is satisfied. Therefore, we propose the following (biased) estimator of $\sigma^2$ based on an independent copy $X$ of the process (\ref{model}):
\begin{align}
\hat\sigma^2 = \frac{1}{M} \left\|\dot X^{(L+M)}-\dot X^{(L)}\right\|_2^2,\quad L=e^{n}, M\geq 1.
\end{align}

\begin{theorem}
Let $n,l\geq 1$ be integers and let $X_1(\cdot),\dots,X_n(\cdot)$ be i.i.d. realizations of the process $X(\cdot)$ satisfying (\ref{model}). Let Assumption \ref{bias-cond} be satisfied. For any $t>0$, we have with probability at least $1-e^{-t}$
$$
|\hat\sigma^2 - \sigma^2| \lesssim \max\left\lbrace c_{*}^2r \lambda_{\max} L^{-2s} ( 1\vee \sqrt{t}\vee t) , \sigma^2 \sqrt{\frac{t}{M}} , \frac{t}{M}\right\rbrace.
$$

\end{theorem}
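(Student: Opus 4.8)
The plan is to recognize $\hat\sigma^2$ as a quadratic form in a Gaussian vector, and then split it into its mean (the bias) and a fluctuation controlled by the Hanson--Wright inequality. First I would set $x_k\egal\int_0^1 e_k(t)\,dX(t)$, so that, since $\{e_k\}$ is orthonormal,
$$
\hat\sigma^2 = \frac1M\bigl\|\dot X^{(L+M)}-\dot X^{(L)}\bigr\|_2^2 = \frac1M\sum_{k=L+1}^{L+M} x_k^2 .
$$
Because $X$ solves \eqref{model} with $S=\sum_m\sqrt{\lambda_m}\xi_m\varphi_m$, each $x_k=\langle e_k,S\rangle+\sigma\int_0^1 e_k\,dW$ is centred Gaussian, and the vector $\mathbf x=(x_{L+1},\dots,x_{L+M})^\top$ is centred Gaussian with covariance $\Sigma = K'+\sigma^2 I_M$, where $K'=\bigl(\langle Ke_k,e_{k'}\rangle\bigr)_{k,k'=L+1}^{L+M}$ is a principal submatrix of $K$ (the cross term drops out since the $\xi_m$ are independent of $W$). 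Writing $\mathbf x=\Sigma^{1/2}Z$ with $Z$ standard normal in $\R^M$, we obtain $\hat\sigma^2=\frac1M Z^\top\Sigma Z$, which is the representation everything rests on.

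Next I would isolate the deterministic bias $\E[\hat\sigma^2]-\sigma^2=\frac1M\tr(K')=\frac1M\sum_{k=L+1}^{L+M}\langle Ke_k,e_k\rangle$. Expanding $\langle Ke_k,e_k\rangle=\sum_{m=1}^r\lambda_m\langle\varphi_m,e_k\rangle^2$ and invoking Assumption~\ref{bias-cond}, which forces $k^{2s}\langle\varphi_m,e_k\rangle^2\le c_*^2$, together with $\sum_m\lambda_m\le r\lambda_{\max}$, every summand is at most $c_*^2 r\lambda_{\max}k^{-2s}$. Since $k\ge L+1$ and $L=e^n$, each such term is bounded by $c_*^2 r\lambda_{\max}L^{-2s}$, so the bias does not exceed $c_*^2 r\lambda_{\max}L^{-2s}$. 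This is precisely the piece that accounts for the factor $1$ inside the $(1\vee\sqrt t\vee t)$ multiplier of the first term in the statement.

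Finally I would control the fluctuation $\hat\sigma^2-\E[\hat\sigma^2]=\frac1M\bigl(Z^\top\Sigma Z-\tr\Sigma\bigr)$ by the Hanson--Wright inequality in the form \eqref{HW}, giving, with probability at least $1-e^{-t}$,
$$
\bigl|\hat\sigma^2-\E[\hat\sigma^2]\bigr|\le \frac{C}{M}\Bigl(\|\Sigma\|_F\sqrt t+\|\Sigma\|_\infty t\Bigr).
$$
The remaining task is to estimate the two matrix norms: since $K'\ge0$, both $\|K'\|_\infty$ and $\|K'\|_F$ are at most $\tr(K')\le M c_*^2 r\lambda_{\max}L^{-2s}$, while the noise part contributes $\|\sigma^2 I_M\|_F=\sigma^2\sqrt M$ and $\|\sigma^2 I_M\|_\infty=\sigma^2$. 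Thus $\|\Sigma\|_F\lesssim \sigma^2\sqrt M+M c_*^2 r\lambda_{\max}L^{-2s}$ and $\|\Sigma\|_\infty\lesssim \sigma^2+M c_*^2 r\lambda_{\max}L^{-2s}$; dividing by $M$ produces exactly the chi-square fluctuations $\sigma^2\sqrt{t/M}$ and $\sigma^2 t/M$, together with extra pieces $c_*^2 r\lambda_{\max}L^{-2s}(\sqrt t\vee t)$ that merge with the bias. Combining the bias with these stochastic terms and bounding the sum by a maximum yields the claim. The main obstacle is the bookkeeping in this last step: one must keep the negligible $L^{-2s}$ contributions attached to the correct power of $t$ so that they assemble into the single factor $(1\vee\sqrt t\vee t)$, and use the principal-submatrix bounds $\|K'\|_\infty,\|K'\|_F\le\tr(K')$ (rather than the cruder $\|K'\|_\infty\le\lambda_{\max}$) to make the $t$-dependence match the stated form.
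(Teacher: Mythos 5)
Your proof is correct, and it takes a genuinely different (and cleaner) route than the paper. The paper decomposes $\hat\sigma^2-\sigma^2$ into three pieces --- the signal quadratic form $\frac1M\sum_l\langle S,e_l\rangle^2$, the cross term $\frac2M\sum_l\langle S,e_l\rangle z_l$, and the noise term $\frac1M\sum_l z_l^2-\sigma^2$ --- and treats them separately: Hanson--Wright in the $r$-dimensional variable $\xi$ for the first (with the matrix $A$ of eigenfunction coefficients), a conditional Gaussian deviation bound for the second, a chi-square deviation bound for the third, followed by a union bound. You instead write $\hat\sigma^2=\frac1M Z^\top\Sigma Z$ with $\Sigma=K'+\sigma^2 I_M$ and apply Hanson--Wright \emph{once} to the full $M$-dimensional quadratic form; the cross term is absorbed automatically into $\Sigma^{1/2}Z$, no conditioning or union bound is needed, and the key norm bounds $\|K'\|_\infty\le\|K'\|_F\le\tr(K')\le Mc_*^2r\lambda_{\max}L^{-2s}$ follow from positive semidefiniteness of the principal submatrix. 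Both routes land on the same bound; yours is more self-contained (the paper explicitly omits the details of assembling its three pieces), at the cost of working in dimension $M$ rather than $r$, which here changes nothing since only $\tr(K')$ enters. Two small remarks: the Hanson--Wright inequality you cite as \eqref{HW} is stated in the paper for an average of $n/2$ independent quadratic forms, so you should invoke the single-vector version (a trivial specialization); and both your argument and the paper's actually produce $\sigma^2 t/M$ for the last term, whereas the theorem statement writes $t/M$ --- that discrepancy is in the paper, not in your proof.
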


\begin{proof}

We have, in view of Plancherel inequality, that
\begin{align}\label{sigma-est}
&\hat\sigma^2  - \sigma^2= \frac{1}{M} \left\| S^{(L+M)}  - S^{(L)} \right\|_2^2 + \frac{2}{M}\langle S^{(L+M)}  - S^{(L)},\dot W^{(L+M)}-\dot W^{(L)}\rangle\notag\\
&\hspace{6cm}+ \frac{1}{M}\left\|\dot W^{(L+M)}-\dot W^{(L)}\right\|_2^2- \sigma^2\notag\\
&=  \frac{1}{M} \sum_{l=L}^{L+M} \langle S,e_l\rangle^2 + \frac{2}{M}\sum_{l=L}^{L+M}\langle S,e_l\rangle z_l+ \frac{1}{M}\sum_{l=L}^{L+M} z_l^2- \sigma^2 = I+II+III,
\end{align}
where $z_L,\ldots,z_{L+M}$ are i.i.d. standard normal random variables also independent from $S$.
 
We now take the expectation
\begin{align*}
\E\left[\hat\sigma^2\right]  - \sigma^2=  \frac{1}{M} \sum_{l=L}^{L+M}\sum_{j=1}^r \lambda_j \langle  \varphi_j, e_l\rangle^2.
\end{align*}
Note that $\langle  \varphi_j, e_l\rangle^2 \leq \|\varphi_j - \varphi_j^{(l-1)}\|_2^2$. In view of Assumption \ref{bias-cond}, we get
\begin{align*}
\frac{1}{M} \sum_{l=L}^{L+M}\sum_{j=1}^r \lambda_j \langle  \varphi_j, e_l\rangle^2 &\leq  r \lambda_{\max}  \frac{c_*^2}{M}\sum_{l=L-1}^{L+M-1} l^{-2s}\lesssim  c_*^2 r \lambda_{\max} L^{-2s}.
\end{align*}
The bound in probability follows easily from the representation (\ref{sigma-est}). Indeed, the second term can be treated using standard deviations bounds for Gaussian combined with a conditioning argument. The third term can be treated with a standard deviation inequality for chi-square distributions. The first term can be treated using (\ref{HW}) again. More specifically, set $\xi = (\xi_1,\ldots,\xi_r)^\top$ and $A = (a_{j,j'})_{1\leq j,j'\leq r}$ with
$$
a_{j,j'} = \frac{\sqrt{\lambda_j \lambda_{j'}}}{M}\sum_{l=L}^{L+M}  \langle \varphi_j, e_l \rangle \langle \varphi_{j'}, e_l\rangle.
$$
Then, we have
$$
\frac{1}{M} \sum_{l=L}^{L+M} \langle S,e_l\rangle^2  - \E\left[\frac{1}{M} \sum_{l=L}^{L+M} \langle S,e_l\rangle^2  \right]= \xi^\top A \xi - \E[\xi^\top A \xi],
$$
with $\|A\|_F\lesssim c_{*}^2 r\lambda_{\max} L^{-2s}$ and $\|A\|_{\infty} \lesssim c_*^2 \sqrt{r } \lambda_{\max} L^{-2s}$.

An union bound argument gives the result. Details of the proof are omitted here.

\end{proof}

\section{Minimax lower bound}\label{lower bound}

In this section, we show that the upper bounds of Corollary~\ref{cor:finite} and Theorem~\ref{th:nonpar} cannot be improved in a minimax sense.\begin{theorem}\label{th:finite:lower}
Let $1\le r< \infty$ and let $\l_{\max}>0$ be a given constant.
Then there exist absolute constants $c_0>0$ and $0<c_1<1$ such that, for any integers $n$ and $l$  satisfying $l\geq 2$, $n\geq l$, we have
$$
\inf_{\hat K_n}\sup_{K \in  {\mathcal K}_{r,l}(\l_{\max}) } \mathbb P \left(   \|\hat K_n - K\|_2^2 \geq c_0 [\lambda_{\max} \wedge \sigma^2]^2\frac{(r\wedge l)l}{n} \right) >c_1
$$
where $\inf_{\hat K_n}$ denotes the infimum over all estimators of $K$.
\end{theorem}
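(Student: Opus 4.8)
The plan is to reduce the problem to a finite-dimensional Gaussian matrix model and then run Fano's method on a family of low-rank hypotheses built by a spectrally controlled Varshamov--Gilbert packing.

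First I would reduce to matrices. For any $K\in\cK_{r,l}(\l_{\max})$ the range of $K$ lies in the span of $e_1,\dots,e_l$, so $\langle e_k,S\rangle=0$ for $k>l$ and the coordinates $\langle e_k,\dot X_i\rangle$ with $k>l$ are pure noise whose law does not depend on $K$. Consequently the law of a whole trajectory factorizes as (a $K$-dependent law of the first $l$ coordinates) $\otimes$ (a $K$-independent factor), and the Kullback--Leibler divergence between the trajectory laws under two kernels $K,K'\in\mathcal S_l$ equals that between the laws of the i.i.d. vectors ${\rm x}_i(l)\sim\cN(0,K+\s^2 I^{(l)})$. Hence it suffices to prove the bound in the model where one observes $n$ i.i.d. $\cN(0,K+\s^2 I^{(l)})$ vectors in $\R^l$, with loss $\|\cdot\|_F=\|\cdot\|_2$ on $\mathcal S_l$.

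Next I would build the hypotheses. Set $p=\min(r,\lfloor l/2\rfloor)$ and $q=l-p\ge l/2\ge p$, so that $p\gtrsim \bar r:=r\wedge l$. For a sign matrix $B\in\{-1,+1\}^{p\times q}$ put $V_B=\sqrt a\,\bigl(\begin{smallmatrix} I_p\\ \g B^\top\end{smallmatrix}\bigr)$ and $K_B=V_BV_B^\top=a\bigl(\begin{smallmatrix} I_p & \g B\\ \g B^\top & \g^2 B^\top B\end{smallmatrix}\bigr)$, which is symmetric, nonnegative definite and of rank $p\le r\wedge l$, hence a genuine covariance in $\mathcal S_l$; the scalars $a,\g>0$ are free and I would take $\g^2=1/q$. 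I would choose a packing $\mathcal B\subset\{-1,1\}^{p\times q}$ with (i) $\log|\mathcal B|\ge c\,pq$, (ii) pairwise Hamming distance $\ge pq/8$, so $\|B-B'\|_F^2\ge pq/2$, and (iii) the spectral bound $\|B\|_\infty\le C_0\sqrt l$ for every $B\in\mathcal B$. Since property (iii) holds for the overwhelming majority of sign matrices, a greedy Gilbert--Varshamov packing performed inside the spectrally bounded set yields (i)--(iii) at once.

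The two required estimates are then the following. For separation, keeping only the off-diagonal blocks gives $\|K_B-K_{B'}\|_2^2\ge 2a^2\g^2\|B-B'\|_F^2\ge a^2\g^2 pq\gtrsim a^2\g^2\,\bar r\,l$. For the divergence, since $\S_{K'}:=K'+\s^2 I^{(l)}\succeq\s^2 I^{(l)}$, the Gaussian formula together with $x-\log(1+x)\le x^2$ gives, once the perturbation is small in spectral norm, $\mathrm{KL}\bigl(\cN(0,\S_K)^{\otimes n}\,\|\,\cN(0,\S_{K'})^{\otimes n}\bigr)\le\frac{n}{2\s^4}\|K_B-K_{B'}\|_F^2\lesssim\frac{n}{\s^4}a^2\g^2 pq$. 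The spectral control (iii) enters twice: it makes $\|K_B\|_\infty\le a(1+\g^2\|B\|_\infty^2)\le a(1+C_0^2)$, so that $\|K_B\|_\infty\le\l_{\max}$ reduces to a bound on $a$, and it forces $\|K_B-K_{B'}\|_\infty\lesssim a$, legitimizing the linearization of the KL. Writing $\t=a^2\g^2$, admissibility amounts to the two competing constraints $\t\lesssim\l_{\max}^2/l$ (from $\|K\|_\infty\le\l_{\max}$ and positivity) and $\t\lesssim\s^4/n$ (from $\mathrm{KL}\le\a\log|\mathcal B|$ with $\a<1/8$). Taking $\t$ equal to the minimum of the two right-hand sides and using $l\le n$, a short case check over $\s^2\le\l_{\max}$ versus $\s^2>\l_{\max}$ shows the separation is bounded below by $c_0[\l_{\max}\wedge\s^2]^2\,\bar r\,l/n$. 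Fano's lemma then gives the claim with an absolute $c_1\in(0,1)$, the few degenerate cases where $pq$ is too small for Gilbert--Varshamov being handled by a two-point Le Cam argument.

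The main obstacle is precisely this last balancing step: the hypotheses must be simultaneously well separated in Frobenius norm, close in Kullback--Leibler divergence, and admissible---nonnegative definite, of rank at most $r$, and of spectral norm at most $\l_{\max}$. These requirements pull against one another, and the device that makes them compatible is the spectrally controlled low-rank packing, which keeps both $\|K_B\|_\infty$ and $\|K_B-K_{B'}\|_\infty$ of order $a$ rather than the crude $\l_{\max}$, so that the two constraints $\t\lesssim\l_{\max}^2/l$ and $\t\lesssim\s^4/n$ combine to deliver the sharp factor $[\l_{\max}\wedge\s^2]^2$.
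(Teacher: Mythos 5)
Your proof is correct, but it follows a genuinely different route from the paper's. The paper also reduces to the $n$-fold Gaussian model $\cN(0,K+\s^2 I^{(l)})$ via Girsanov, but then builds its hypotheses from a packing of the Grassmannian $G_r(\RR^l)$: each kernel has the form $K_j=\g P_j$ with $P_j$ a rank-$r$ orthogonal projector, $\g=a(\s^2\wedge\l_{\max})\sqrt{l/n}$, and the packing (Szarek/Pajor) supplies $\log N\gtrsim r(l-r)\gtrsim rl$ hypotheses with pairwise $\|P_i-P_j\|_F\asymp\sqrt{r}$. The key payoff of that choice is that all $\S_j=(\s^2+\g)P_j+\s^2P_j^{\perp}$ share the same spectrum, so the log-determinant term in the Kullback--Leibler divergence cancels and ${\rm KL}(\PP_1,\PP_j)=\frac{\g^2}{4(\s^2+\g)\s^2}\|P_1-P_j\|_F^2$ is exact --- no linearization condition and no spectral control of the packing elements is needed, since orthonormality of the rows of $H_j$ gives $\|K_j\|_\infty=\g$ for free. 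Your Varshamov--Gilbert construction with sign matrices achieves the same cardinality and separation by more elementary means (no Grassmannian entropy results), at the price of two extra ingredients that you correctly identify: the spectral restriction $\|B\|_\infty\le C_0\sqrt{l}$ on the packing, and the smallness condition $\|K_B-K_{B'}\|_\infty\lesssim\s^2$ needed to justify $x-\log(1+x)\le x^2$ in the KL bound; the latter adds the constraint $a\lesssim\s^2$ alongside $a\lesssim\l_{\max}$, which you should state explicitly, although it does not change the outcome since $n\ge l$ makes $(\l_{\max}\wedge\s^2)^2/l\ge(\l_{\max}\wedge\s^2)^2/n$. Both proofs handle $r>l/2$ the same way in spirit (you cap $p$ at $\lfloor l/2\rfloor$; the paper uses nestedness of the classes in $r$), and both invoke the same Fano-type conclusion (Theorem 2.5 of Tsybakov's book). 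Your remark that degenerate small values of $pq$ require a separate two-point argument is a care the paper glosses over as well.
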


\begin{proof}
Let first $r\le l/2$.
Consider the vector-functions $e(t)=(e_1(t),\dots, e_l(t))$ and $\f (t)=(\f_1(t),\dots, \f_r(t))$ and a subset of ${\mathcal K}_{r,l}(\l_{\max})$ composed of kernels $K$ satisfying \eqref{K} with $\l_j\equiv \g$ and  
$$
\f(t)= H e(t)
$$
for suitable $\g>0$ and suitable $r\times l$ matrices $H$. Orthonormality of functions $\f_j$ implies that $H$ must satisfy $HH^\top=I_r$ where $I_r$ is the $r\times r$ identity matrix, i.e., the rows of $H$ should be orthonormal. To each such matrix $H$ we associate a linear subspace $U_H$ of $\R^l$, which is the linear span of the $r$ rows of $H$. Clearly, ${\rm dim} (U_H)=r$ and $H^\top H$ is the orthogonal projector onto $U_H$ in $\R^l$.

Note that the set of all such spaces $U_H$ is the Grassmannian manifold $G_{r}(\mathbb R^l)$, i.e., the set of $r$-dimensional linear subspaces of $\mathbb R^l$. The Grassmannian manifold $G_{r}(\mathbb R^l)$ is a smooth manifold of dimension $d= r(l-r)$. A natural metric $d(\cdot,\cdot)$ on $G_{r}(\mathbb R^l)$ is defined as follows: for $U,\bar U \in G_{r}(\mathbb R^l)$,
$$
d(U,\bar U) \triangleq \|P_U-P_{\bar U}\|_F = \|H^\top H-\bar H^\top \bar H\|_F
$$ 
where $P_U$ is the orthogonal projector onto $U$ and $H$, $\bar H$ are the $r\times l$ matrices with orthonormal rows associated to $U$ and $\bar U$ respectively. We refer to \cite{Pertti} and \cite{MilnorStasheff} for more details on the Grassmannian manifold. 

From now on, we will identify $U\in G_{r}(\mathbb R^l)$ with the associated orthogonal projector $P_U=H^\top H$.
The behavior of entropy numbers of the Grassmannian manifold is well studied (\cite{Szarek1982}, see also Proposition 8 in \cite{Pajor1998}). In particular, for any $\epsilon \in (0,1)$ there exists a family of orthogonal projectors $\mathcal U \subset G_{r}(\mathbb R^l)$ such that
\begin{align}\label{Grassman-entropy}
|\mathcal U | \geq  \left\lfloor \frac{\bar c}{\epsilon} \right\rfloor^{d}\quad\text{and}\quad   \bar c \epsilon \sqrt{r} \le \|P-Q\|_F \le \frac1{\bar c} \epsilon \sqrt{r}, \; \forall P,Q \in \mathcal{U}, \; P \neq Q,
\end{align}
for some small enough universal  constant $\bar c>0$. Here $|\mathcal U |$ denotes the cardinality of $\mathcal U$. We take in what follows $\epsilon = 1/2$. Set $N = |\mathcal U |$ and $
\mathcal U  = \left\lbrace  P_1,\dots,P_N \right\rbrace$. The associated $H$-matrices will be denoted by $H_1,\dots,H_N$.  Let  $K_j$ be a kernel of the form  \eqref{K} with eigenvalues $\l_i\equiv \g, i=1,\dots, r,$ and  
$$
\f(t)= H_j e(t), \quad j=1,\dots, N,
$$
where $\gamma =a(\sigma^2\wedge \lambda_{\max})\sqrt{\frac{l}{n}}$ and $a \in (0,1)$ is an absolute constant to be chosen later. Consider the set $\cK'=\{K_1,\dots, K_N\}$. Clearly, we have $\cK' \subset  {\mathcal K}_{r,l}(\l_{\max})$.

We now evaluate the Kullback-Leibler divergence between two probability measures induced by the observations $\{X_1(t),\dots, X_n(t), t\in [0,1]\}$ corresponding to the kernels $K_1$ and $K_j$ (with $j\ne 1$). Using the Girsanov formula and the fact that $K_j$ is bilinear in $\{e_k\}$ it is easy to check that this divergence is equal to 
 the Kullback-Leibler divergence between  the $n$-product distributions of the associated Gaussian vectors $ \left(    \int_0^1 e_1(t)d X (t),\dots, \int_0^1 e_l(t)d X (t)\right)$. If $K=K_j$ this vector is distributed as ${\mathcal N}\left(0,\Sigma_j\right)$ with $\Sigma_j = \sigma^2 I_{l} + 
\gamma P_j = (\sigma^2+ \gamma)P_j + \sigma^2 P_j^{\perp}$ and $P_j^{\perp} = I_{ l} - P_j$. Denote the corresponding Gaussian measure by $\PP_j$ and by $\PP_j^{\otimes n}$ its $n$-product. Let ${\rm KL}(\PP,\QQ)$ be the Kullback-Leibler divergence between  two probability measures $\PP$ and $\QQ$.

It is easy to see that all matrices $\Sigma_j$ have the same eigenvalues. Thus,  for any $2\leq j \leq N$ we have
\begin{align*}
{\rm KL}(\PP_1^{\otimes n}, \PP_j^{\otimes n}) &= n \,{\rm KL}(\PP_1, \PP_j) \notag\\
&= \frac{n}{2} \left[ \mathrm{tr}(\Sigma_1^{-1} \Sigma_j) - l- \log \left(  \mathrm{det}(\Sigma_1^{-1}\Sigma_j)  \right) \right]\notag\\
&= \frac{n}{2} \left[ \mathrm{tr}(\Sigma_1^{-1}(\Sigma_j-\Sigma_1)\right].
\end{align*}
Now, $\Sigma_1^{-1} = \frac{1}{\sigma^2 + \gamma}P_1 + \frac{1}{\sigma^2}P_1^{\perp}$, which yields
\begin{align*}
\mathrm{tr}(\Sigma_1^{-1}(\Sigma_j-\Sigma_1)) &= \frac{\gamma}{\sigma^2 + \gamma}\mathrm{tr}(P_1 (P_j - P_1))+ \frac{\gamma}{\sigma^2}\mathrm{tr}(P_1^{\perp}(P_j-P_1))\\
&= \left(\frac{\gamma}{\sigma^2 + \gamma} - \frac{\gamma}{\sigma^2}\right) \left( \mathrm{tr}(P_1P_j) - r\right)\\
& = \frac{\gamma^2}{2(\sigma^2 + \gamma)\sigma^2}\|P_1- P_j\|_F^2\\
&\leq \frac{r\gamma^2}{8 \bar c^2 (\sigma^2 + \gamma)\sigma^2}
\end{align*}
where for the last inequality we have used \eqref{Grassman-entropy} with $\epsilon=1/2$, and the fact that $\mathrm{tr}(P_1P_j) = r - \|P_1 - P_j\|_F^2/2$.
Combining the last two displays, we find
\begin{align*}
{\rm KL}(\PP_1^{\otimes n}, \PP_j^{\otimes n}) &\leq a^2\frac{(\lambda_{\max} \wedge \sigma^2)^2}{8\bar c^2(\sigma^2 + \gamma)\sigma^2}rl ,\quad \forall \ 2\leq j \leq N.
\end{align*}

Recall that we assume $r\le l/2$, so that the dimension of the Grassmannian satisfies $d =r(l-r) \geq rl/2$. Consequently, in view of (\ref{Grassman-entropy}), we have that $\log |\mathcal U|\ge {\tilde c}rl $ for some absolute constant ${\tilde c}>0$. Thus, we get
\begin{align*}
{\rm KL}(\PP_1^{\otimes n}, \PP_j^{\otimes n}) &\leq \frac{1}{16}\log |\mathcal U|,\quad \forall \ 2\leq j \leq N,
\end{align*}
provided $a>0$ is taken sufficiently small independently of $r,l,n,\sigma,\lambda_{\max}$.

Next, for any $1\leq i,j\leq N$ with $i\neq j$,
$$
\| K_i  - K_j \|_2^2 = \gamma^2 \|H_i^\top H_i-H_j^\top H_j\|_F^2=\gamma^2 \|P_i - P_j\|_F^2  \geq c a^2 [\sigma^4\wedge \lambda_{\max}^2]\frac{ r l }{n},
$$
where $c>0$ is a absolute constant and the last inequality is due to~(\ref{Grassman-entropy}).
The result now follows from the last two displays by application of Theorem 2.5 in \cite{Tsybakov-book}. 

Finally, consider the case $r> l/2 $. Note that the classes  ${\mathcal K}_{r,l}(\l_{\max})$ are nested in $r$. Assuming w.l.o.g. that $l$ is even, we get that the minimax risk over ${\mathcal K}_{r,l}(\l_{\max})$ is bounded from below by the  minimax risk on ${\mathcal K}_{l/2,l}(\l_{\max})$.
But the minimax risk on ${\mathcal K}_{l/2,l}(\l_{\max})$ has been already treated above and we have proved that the lower rate is of the order $l^2/n$, which is the desired rate when $r>l/2$.
\end{proof}

\begin{remark}
It is possible to prove a minimax lower bound ensuring that the bound in Theorem \ref{th:smooth} is optimal at least regarding the $n$ dependence. Indeed, by a similar argument to that used in the proof of Theorem \ref{th:finite:lower}, we can prove the existence of an absolute constant $0<c_2<1$ and a constant $c_3>0$ possibly depending on $\sigma^2,\lambda_{\max},\rho,r$ such that, for any integer $n\geq 1$ we have
$$
\inf_{\hat K_n}\sup_{K \in  \overline{\cK}_r(s,\rho;\lambda_{\max})} \mathbb P \left(   \|\hat K_n - K\|_2^2 \geq c_3  
 \min\left(n^{-\frac{2s}{2s+1}},  n^{-s/(s+1)}\right)\right) >c_2
$$
where $\inf_{\hat K_n}$ denotes the infimum over all estimators of $K$. Specifying the dependence of the minimax rate on parameters $\sigma^2,\lambda_{\max},\rho,r$ remains an interesting open question.

\end{remark}
%

%
%
%
%
%
%
%
%
%
%
%
%
%
%
%
%
%
%

%
%
%
%
%
%
%
%

\begin{thebibliography}{00}




 \bibitem[{Bigot~et~al.(2010)}]{Bigot2010}  
Bigot, J., Biscay, R., Loubes, J.-M., Muniz-Alvarez, L., 2010. Nonparametric estimation of covariance functions by model selection. Electron. J. Statist., 4, 822--855.





\bibitem[{Bunea and Xiao(2013)}]{BuneaXiao2013}
Bunea, F., Xiao, L., 2013. On the sample covariance matrix estimator of reduced
  effective rank population matrices, with applications to FPCA.

\bibitem[{Hall et~al.(2006)Hall, M{\"u}ller, and Wang}]{HallMullerWang2006}
Hall, P., M{\"u}ller, H.-G., Wang, J.-L., 2006. Properties of principal
  component methods for functional and longitudinal data analysis. Ann.
  Statist. 34~(3), 1493--1517.

\bibitem[{Koltchinskii et~al.(2011)Koltchinskii, Lounici, and
  Tsybakov}]{KoltLouTsy2011}
Koltchinskii, V., Lounici, K., Tsybakov, A.~B., 2011. Nuclear-norm penalization
  and optimal rates for noisy low-rank matrix completion. Ann. Statist. 39~(5),
  2302--2329.

\bibitem[Koltchinskii and Rangel (2013)]{Kolt}
Koltchinskii, V. and Rangel, P., 2013. Low rank estimation of smooth kernels on graphs. 
Ann. Statist. 41~(2), 604--640. 

\bibitem[{Lounici(2014)}]{lounici2014}
Lounici, K., 2014. High-dimensional covariance matrix estimation with missing
  observations. Bernoulli 20~(3), 1029--1058.

\bibitem[{Mattila(1995)}]{Pertti}
Mattila, P., 1995. Geometry of sets and measures in {E}uclidean spaces. Vol.~44
  of Cambridge Studies in Advanced Mathematics. Cambridge University Press,
  Cambridge, fractals and rectifiability.

\bibitem[{Milnor and Stasheff(1974)}]{MilnorStasheff}
Milnor, J.~W., Stasheff, J.~D., 1974. Characteristic classes. Princeton
  University Press, Princeton, N. J.; University of Tokyo Press, Tokyo, Annals
  of Mathematics Studies, No. 76.

\bibitem[{Pajor(1998)}]{Pajor1998}
Pajor, A., 1998. Entropy of the Grassmann manifold. Convex Geometry Analysis,
  MSRI Publications 34, 181--188.

\bibitem[{Rudelson and Vershynin(2013)}]{RudelsonVershynin}
Rudelson, M., Vershynin, R., 2013. Hanson-{W}right inequality and
  sub-{G}aussian concentration. Electron. Commun. Probab. 18, no. 82, 9.

\bibitem[{Szarek(1982)}]{Szarek1982}
Szarek, S.~J., 1982. Nets of {G}rassmann manifold and orthogonal group. In:
  Proceedings of research workshop on {B}anach space theory ({I}owa {C}ity,
  {I}owa, 1981). Univ. Iowa, Iowa City, IA, pp. 169--185.

\bibitem[{Tsybakov(2009)}]{Tsybakov-book}
Tsybakov, A.~B., 2009. {\it Introduction to Nonparametric Estimation.} Springer, New York. 

\bibitem[{Vershynin(2012)}]{Vershynin}
Vershynin, R., 2012. Introduction to the non-asymptotic analysis of random
  matrices. In: Compressed sensing. Cambridge Univ. Press, Cambridge, pp.
  210--268.

\bibitem[{Yao et~al.(2005)Yao, M{\"u}ller, and Wang}]{YaoMullerWang2005}
Yao, F., M{\"u}ller, H.-G., Wang, J.-L., 2005. Functional data analysis for
  sparse longitudinal data. J. Amer. Statist. Assoc. 100~(470), 577--590.


\end{thebibliography}
%


\end{document}